\newtheorem{theorem}{Theorem}[section]
\newtheorem{lemma}[theorem]{Lemma}
\newtheorem{corollary}[theorem]{Corollary}
\newtheorem{conjecture}[theorem]{Conjecture}
\newcommand{\abc}{\mathcal{ABC}}
\begin{document}

\title{The Structure of ABC-minimal Trees with Given Number of Leaves}

\author{Bojan Mohar\thanks{Supported in part by an NSERC Discovery Grant (Canada),
   by the Canada Research Chair program, and by the
    Research Grant P1--0297 of ARRS (Slovenia).}~\thanks{On leave from:
    IMFM \& FMF, Department of Mathematics, University of Ljubljana, Ljubljana,
    Slovenia.}\\[1mm]
\tt{mohar@sfu.ca}\\
  Department of Mathematics\\
  Simon Fraser University\\
  Burnaby, BC, Canada
}

\date{}

\maketitle

%%%%%%%%%%%%%%%%%%%%%%%%%%%%%%%%%%%%%%%%%%%%%%%%%%%%%%%%%%%%%%%%%%%%%%%%%%%%%%%%%%%%%%%%%%%%
%\begin{center}
%(Received June 9, 2017)
%\end{center}

\begin{abstract}
The atom-bond connectivity (ABC) index is a degree-based molecular descriptor with diverse chemical applications. Recent work of Lin et al.~\cite{Lin_etal_16} gave rise to a conjecture about the minimum possible ABC-index of trees with a fixed number $t$ of leaves. We show that this conjecture is incorrect and we prove what the correct answer is. It is shown that the extremal tree $T_t$ is unique for $t\ge 1195$, it has order $|T_t| = t + \lfloor \tfrac{t}{10}\rfloor +1$ (when $t$ mod 10 is between 0 and 4 or when it is 5, 6, or 7 and $t$ is sufficiently large) or $|T_t| = t + \lfloor \tfrac{t}{10}\rfloor + 2$ (when $t$ mod 10 is 8 or 9 or when it is 5, 6, or 7 and $t$ is sufficiently small) and its ABC-index is $\Big( \sqrt{\tfrac{10}{11}} + \tfrac{1}{10}\sqrt{\tfrac{1}{11}}\, \Big)\cdot t + O(1)$.
\end{abstract}
%%%%%%%%%%%%%%%%%%%%%%%%%%%%%%%%%%%%%%%%%%%%%%%%%%%%%%%%%%%%%%%%%%%%%%%%%%%%%%%%%%%%%%%%%%%%

\baselineskip=0.30in

\section{Introduction}

One of the most important topological indices used in Chemical Graph Theory is the Atom Bond Connectivity index, also known as the ABC-index.  It was introduced by Estrada \cite{ref7} with relation to the energy of formation of alkanes. It was extensively studied in the last few years, from the point of view of chemical graph theory \cite{ref8,ref9}, and in general graphs \cite{ref10}. Additional chemical applications of the ABC-index were discovered \cite{ref12,ref13,ref15,ref16,ref17}; we also refer to \cite{ref18,ref19,ref20} and the references therein.

The ABC-index can be defined for any graph $G$. For $v\in V(G)$, let $d_v$ denote the degree of the vertex $v$. For each edge $uv \in E(G)$, we consider its \emph{ABC-contribution}, which is the value
$$f(d_u,d_v) = \sqrt{\frac{d_u+d_v-2}{d_u d_v}}\,.$$
Then the \emph{Atom-Bond Connectivity index} (shortly \emph{ABC-index}) of $G$ is defined as the sum of ABC-contributions of its edges:
$$\abc(G) = \sum_{uv\in E(G)} f(d_u,d_v).$$
In this paper we shall only consider the ABC-indices of trees.

It is of interest to determine the extremal values of the ABC-index. In particular, the question about which trees attain the minimum possible value of the ABC-index among all trees with the given number of vertices has been extensively studied \cite{ADGH14,ref25,ref26,ref22,ref18,ref19,ref20,ref24,ref29,ref27}; see also a survey article \cite{ref28}. The solution to this long standing open question has been announced recently in \cite{AHM17}.

A similar question has been investigated in \cite{GoGu14, MaNoGu15, GoMaNoGu15, Lin_etal_15, Lin_etal_16}, with intention to figure out which trees with the given number $t$ of leaves (i.e. vertices of degree 1) have the minimum value of the ABC-index.
In this paper, a tree $T$ is said to be \emph{$t$-minimal} if $T$ has $t$ leaves and no other tree with the same number of leaves has smaller ABC-index. The problem of classifying $t$-minimal trees has been raised in \cite{GoGu14} and \cite{MaNoGu15} and was further explored in \cite{GoMaNoGu15, Lin_etal_15, Lin_etal_16}. Magnant et al.~\cite{MaNoGu15} claimed that $t$-minimal trees are balanced double stars whenever $t\ge19$. This speculation was refuted by Goubko et al.\ in \cite{GoMaNoGu15}, where $t$-minimal trees were found for all values of $t$ up to 53. Lin et al.\ \cite{Lin_etal_16} improved the knowledge about $t$-minimal trees and were able to determine $t$-minimal trees for all $t\le219$. By exploring the patterns that have shown up in their computer-aided calculations, the following conjecture was proposed.

\begin{conjecture}[Lin et al. \cite{Lin_etal_16}]
For $t\ge88$, a $t$-minimal tree has $t + \lfloor \tfrac{t}{11}\rfloor - 1$ vertices.
\end{conjecture}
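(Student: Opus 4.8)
The plan is to determine the number of vertices of a $t$-minimal tree by a two-stage argument: first reduce the extremal tree to a rigid normal form, then solve a one-parameter optimization that fixes the number of internal vertices. For the reduction I would use local exchange (rearrangement) arguments: starting from a hypothetical $t$-minimal tree, I would show that replacing any subtree by a cheaper subtree carrying the same number of leaves cannot decrease $\abc$, and conclude that the extremal tree is built from a single high-degree core to which one attaches many identical rooted \emph{branches}, each branch being a vertex of some fixed degree $d$ that bears $d-1$ leaves. The only free quantity is then $d$, and the content of the conjecture is the assertion that the optimal value is $d=12$, so that the roughly $t/11$ branching vertices (each carrying eleven leaves) contribute about $\lfloor t/11\rfloor$ additional internal vertices over the $t+1$ vertices of a bare star, with the precise additive constant (here $-1$) settled by the exact structure of the core.

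Second, and at the heart of the matter, I would carry out the branch optimization. Attaching a degree-$d$ branch to a core of large degree costs $(d-1)\,f(1,d)$ for its $d-1$ leaf edges, plus the cost $f(d,\cdot)$ of the single edge joining it to the core, while it accounts for $d-1$ of the $t$ leaves. Since $f(1,d)=\sqrt{(d-1)/d}$ and the core edge tends to $\sqrt{1/d}$ as the core degree grows, the amortized ABC-cost per leaf of such a branch is
\[
g(d) \;=\; \sqrt{\frac{d-1}{d}} \;+\; \frac{1}{d-1}\sqrt{\frac{1}{d}}\,.
\]
Minimizing $\abc$ for fixed $t$ reduces to minimizing $g(d)$ over integers $d$, and the conjectured count $t+\lfloor t/11\rfloor-1$ is exactly the claim that this minimum is attained at $d=12$. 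The global assembly, the exact central structure, and the rounding when $t$ is not a multiple of $11$ would then follow by bookkeeping once the optimal $d$ is known.

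The main obstacle is precisely this last comparison. The values $g(d)$ for $d$ near twelve agree to four decimal places, so the optimal $d$ cannot be settled numerically; the argument must instead compare $g(12)$ against its neighbours $g(11)$ and $g(13)$ through exact, sharp inequalities between sums of nested square roots, and it is this hair's-breadth comparison on which the entire statement turns. I would also have to replace the asymptotic value $\sqrt{1/d}$ of the core edge by its exact (finite-degree) contribution, verify that no non-star branch shape does better, and dispatch the finitely many small cases below the threshold $t=88$ by direct computation. All of these are routine once the value of $d$ minimizing $g$ has been rigorously identified, so the viability of the conjecture rests squarely on that single delicate inequality.
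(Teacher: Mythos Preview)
Your outline is methodologically sound and in fact mirrors the paper's own strategy almost exactly: reduce to a single high-degree root carrying star branches of a common order, and then minimize the amortized per-leaf cost
\[
g(d)\;=\;\sqrt{\tfrac{d-1}{d}}\;+\;\tfrac{1}{d-1}\sqrt{\tfrac{1}{d}}\,.
\]
The fatal gap is that the ``hair's-breadth comparison'' you identify goes the \emph{other} way. A direct computation gives
\[
g(11)=\sqrt{\tfrac{10}{11}}+\tfrac{1}{10}\sqrt{\tfrac{1}{11}}=0.98361\ldots,
\qquad
g(12)=\sqrt{\tfrac{11}{12}}+\tfrac{1}{11}\sqrt{\tfrac{1}{12}}=0.98367\ldots,
\]
so $g(11)<g(12)$ (the difference is about $5.7\times10^{-5}$). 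The integer minimum of $g$ is therefore at $d=11$, not $d=12$: the optimal branch carries $10$ leaves, there are roughly $t/10$ branches, and for large $t$ the $t$-minimal tree has $t+\lfloor t/10\rfloor+1$ or $t+\lfloor t/10\rfloor+2$ vertices. This is precisely what the paper proves, and it refutes the conjecture you are trying to establish.

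In other words, every step of your plan up to the final inequality is essentially the paper's own argument (normal form via exchange lemmas, a single root or mixed vertex, nearly equal star orders, reduction to a one-parameter cost); the conjecture of Lin et~al.\ fails not because the structural picture is wrong but because they, like you, placed the minimum of $g$ at the wrong integer. Since the entire vertex count hinges on that single comparison, no amount of bookkeeping on the central structure or the residues modulo $11$ can rescue the statement.
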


Behind this conjecture there was also the precise description how the $t$-minimal trees would look like.
Unfortunately, the authors of \cite{Lin_etal_16} failed to realize that with $t$ growing, the observed patterns may still change. Here we disprove their conjecture and determine the $t$-minimal trees for every $t\ge1195$. Our main result is the following.

\begin{theorem}
\label{thm:main}
For every $t\ge 1195$, there is a unique $t$-minimal tree $T_t$ that is described in the caption of Figures \ref{fig:T_t} and \ref{fig:T_t89}. Let $r = t - 10\lfloor \tfrac{t}{10}\rfloor$. If $0\le r\le 7$ and $t$ fits the values listed at Figure \ref{fig:T_t}, then $T_t$ has $t + \lfloor \tfrac{t}{10}\rfloor + 1$ vertices and its ABC-index is equal to
$$
  \Big( \sqrt{\tfrac{10}{11}} + \tfrac{1}{10}\sqrt{\tfrac{1}{11}}\, \Big) t +
  \tfrac{9}{2}\sqrt{\tfrac{1}{11}} + \Big( 11\sqrt{\tfrac{11}{12}} + \sqrt{\tfrac{1}{12}} - \sqrt{110} + \tfrac{1}{10}\sqrt{11}\,\Big) r + O(t^{-1}).
%  - \tfrac{10}{11}\sqrt{\tfrac{1}{11}}\,\Big) r + O(t^{-1}).
$$
If\/ $5\le r\le 9$ and $t$ fits the values listed at Figure \ref{fig:T_t89}, it has $t + \lfloor \tfrac{t}{10}\rfloor + 2$ vertices and its ABC-index is equal to
$$
  \Big( \sqrt{\tfrac{10}{11}} + \tfrac{1}{10}\sqrt{\tfrac{1}{11}}\, \Big) t + \tfrac{9}{2}\sqrt{\tfrac{1}{11}} +
  9(10-r)\Big( \tfrac{14}{45}\sqrt{10} - \sqrt{\tfrac{10}{11}} - \tfrac{1}{10}\sqrt{\tfrac{1}{11}}\,\Big) + O(t^{-1}).
$$
\end{theorem}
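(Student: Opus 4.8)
The plan is to carry the proof out in four stages: reduce an arbitrary $t$‑minimal tree to a canonical two‑level shape, optimize the internal degrees inside that shape, run a finite optimization over the residue $r$, and finally control the error terms tightly enough to obtain uniqueness, the exact threshold $t\ge 1195$, and the closed‑form value of the $\abc$‑index.

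\textbf{Reductions.} I would first record the elementary properties of $f$ that drive everything: $f(1,d)=\sqrt{1-1/d}$ is strictly increasing and strictly concave in $d$; $f(a,c)$ is strictly decreasing in $a$ whenever $c\ge3$; and inserting a vertex of degree $2$ (or subdividing an edge) strictly increases $\abc$, since $f(a,2)=\sqrt{1/2}$ and $2\sqrt{1/2}=\sqrt2>1\ge f(a,c)$. Hence a $t$‑minimal tree has no vertex of degree $2$, so it has at most $2t-2$ vertices, and therefore a $t$‑minimal tree exists. Call a non‑leaf vertex a \emph{hub} if it has a leaf‑neighbour; then every neighbour of a non‑hub internal vertex has degree $\ge3$. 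A single edge‑contraction argument (contracting an edge between two non‑hub internal vertices strictly lowers $\abc$, because every affected edge becomes cheaper) shows the non‑hub internal vertices form an independent set, and a rerouting argument — detach a hub from one non‑hub vertex and re‑attach it to another, gaining by the strict convexity of $a\mapsto f(a,c)$ — shows there is at most one non‑hub internal vertex and, when it exists, it is adjacent to every hub. This yields the \emph{two‑level shape}: a \emph{superhub} $z$ of degree $n$, $n$ hubs attached to it, and $t$ leaves distributed among the hubs. (Ruling out that some competing global shape — several superhubs, a path of superhubs, or hubs with more than one superhub‑neighbour — beats this requires the quantitative work of the last stage.)

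\textbf{Optimization within the shape.} With the shape fixed, the unknowns are the integer $n$ and the leaf‑counts $\ell_1,\dots,\ell_n$ with $\sum\ell_i=t$, and
\[
  \abc(T)=\sum_{i=1}^{n}\Bigl(\ell_i\,f(1,\ell_i+1)+f(\ell_i+1,n)\Bigr).
\]
The summand, as a function of $\ell_i$, is convex, so an optimal leaf‑distribution is balanced (all $\ell_i$ within $1$ of each other), and the per‑leaf cost $h(d)=f(1,d)+\tfrac1{d-1}f(d,n)$, which for $n\to\infty$ tends to $\sqrt{1-1/d}+\tfrac1{(d-1)\sqrt d}$, is uniquely minimized over integers at $d=11$; thus hubs want degree $11$, carrying $10$ leaves each. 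Writing $t=10k+r$ with $0\le r\le9$, exactly two near‑optimal options survive: take $n=k$ hubs, $r$ of them carrying $11$ leaves (the ``$+1$'' family, order $t+k+1$), or take $n=k+1$ hubs, $10-r$ of them carrying $9$ leaves (the ``$+2$'' family, order $t+k+2$); one also checks that parking leaves directly on $z$, or using one overloaded hub in place of several slightly loaded ones, is never better. Expanding $f(d,n)=\tfrac1{\sqrt d}\bigl(1+O(1/n)\bigr)$ and $f(11,n)=\tfrac1{\sqrt{11}}\bigl(1+\tfrac{9}{2n}+O(1/n^2)\bigr)$ and summing yields exactly the stated $\abc$‑formulas, the linear‑in‑$r$ and linear‑in‑$(10-r)$ corrections being the costs of the degree‑$12$ and degree‑$10$ hubs relative to the degree‑$11$ baseline. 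Comparing the two families reduces to comparing $\varepsilon_1 r$ with $\varepsilon_2(10-r)$ for two small positive explicit constants; ``$+1$'' wins outright for $r\le4$, ``$+2$'' wins outright for $r\ge8$, and for $r\in\{5,6,7\}$ the winner is decided by the $O(t^{-1})$ terms, which is exactly why the answer there depends on the size of $t$ and why the figures list a finite set of exceptional values.

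\textbf{The main obstacle.} All of the decisive gaps — degree $11$ versus degree $12$ for the hubs, the ``$+1$'' versus ``$+2$'' comparison, and the two‑level shape versus every alternative skeleton — are of order $10^{-3}$ or smaller per unit of $t$, so each monotonicity/convexity inequality invoked above must be upgraded to an explicit numerical inequality with a fully quantified remainder, and one must verify that for $t\ge1195$ no alternative structure (in particular no structure with $\approx t/11$ hubs of degree $12$, which is what the Lin et al.\ conjecture predicts) drops below the claimed minimum; for smaller $t$ the bounded lower‑order terms can and do overturn these razor‑thin margins, which is precisely why the conjecture — extrapolated from small $t$ — fails. I expect this error‑bookkeeping and the exhaustive elimination of competing skeletons, rather than any single conceptual step, to be the crux, with the final $\abc$‑index evaluation being a long but routine Taylor computation.
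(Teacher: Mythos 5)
Your overall strategy --- reduce to a one-root, star-branch structure, show the optimal branch order is $10$ leaves per star, set up the ``$+1$'' versus ``$+2$'' comparison, and Taylor-expand $f$ to get the stated formulas --- is the same as the paper's, and your asymptotic bookkeeping (per-leaf cost minimized at hub degree $11$, the $r$ and $10-r$ correction terms, the $O(t^{-1})$ tie-breaking for $r\in\{5,6,7\}$) matches Lemma \ref{lem:10} and the end of Lemma \ref{lem:6a}. But there is a genuine gap at the step you dispose of in one clause: ``one also checks that parking leaves directly on $z$ \dots is never better.'' This is the elimination of what the paper calls a \emph{mixed vertex}, and it is not a routine check --- it is the heart of the proof and the reason the threshold is $1195$ at all. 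For most $36\le t\le 1194$ the $t$-minimal tree \emph{does} have a mixed vertex, so the statement you want is false below the threshold and only barely true above it. The paper needs Lemma \ref{lem:6}(b) (first bounding the number $l$ of leaves at the mixed vertex by $37$, then a careful comparison via inequality (\ref{eq:noRandM}), which itself fails for $d\le 25$ and must be patched by a separate argument plus computer verification for $t\le 1500$) and the first half of Lemma \ref{lem:6a} (the inequality (\ref{eq:10vs11}) computation showing a mixed vertex of degree $>120$ loses to trading a pendant leaf for an $S_{10}\to S_{11}$ upgrade). Your proposal contains no mechanism for this comparison, and your convexity/rerouting heuristics do not supply one, because the margin is of order $10^{-3}$ per leaf and the sign genuinely flips with $t$.

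Two smaller points. First, your reduction to the ``two-level shape'' via contraction and rerouting is plausible but incomplete: the paper's Lemma \ref{lem:5} has to handle two roots that are \emph{not} adjacent (separated by a mixed vertex) with a chain of degree comparisons using the exchange lemma, and your ``strict convexity of $a\mapsto f(a,c)$'' rerouting does not obviously cover that configuration. Second, the exact threshold $1195$, the uniqueness claim, and the lists of exceptional $t$ in the figure captions are obtained in the paper by exhaustive computer search over the bounded parameter space for $t\le 2000$ (and $t\le 10000$ for the $r\in\{5,6,7\}$ crossovers); you correctly identify this error-bookkeeping as the crux but do not carry it out, so as written the proposal establishes at best the asymptotic form of the answer, not the theorem as stated.
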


\begin{figure}[htb]
    \centering
    \includegraphics[width=0.72\textwidth]{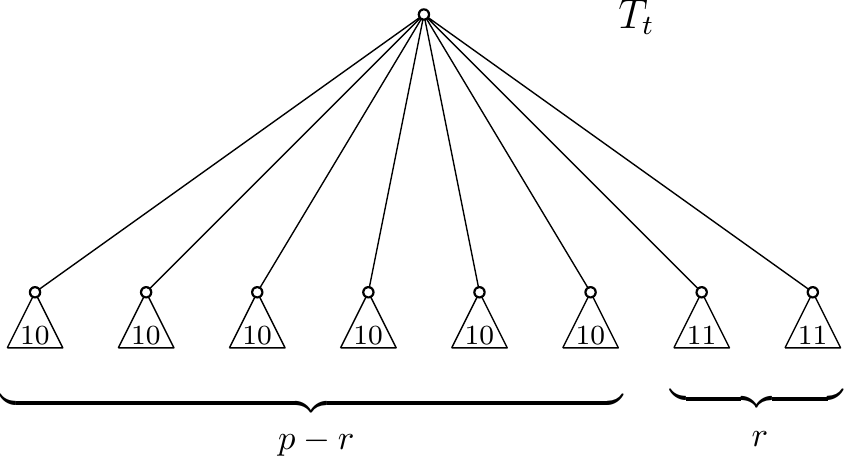}
    \caption{The unique $t$-minimal tree with $t=10p+r$ ($0\le r\le 7$) has one root vertex, $p-r$ $S_{10}$-branches and $r$ $S_{11}$-branches (see Section \ref{sect:2} for definitions) when the following holds: ($r=0$ and $t\ge1030$) or ($r\in\{1,2,3,4\}$ and $t\ge1201$) or ($r=5$ and $t\ge 1355$) or ($r=6$ and $t\ge2316$) or ($r=7$ and $t\ge7227$).}
    \label{fig:T_t}
\end{figure}

\begin{figure}[htb]
    \centering
    \includegraphics[width=0.72\textwidth]{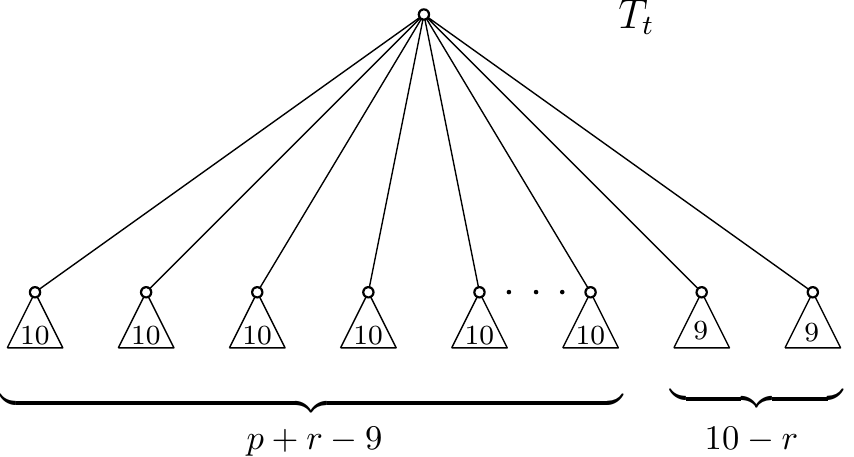}
    \caption{The unique $t$-minimal tree with $t=10p+r$ ($5\le r\le 9$) has one root vertex, $p+r-9$ $S_{10}$-branches and $10-r$ $S_{9}$-branches when ($r=5$ and $1155\le t\le 1345$) or ($r=6$ and $1106\le t\le2306$) or ($r=7$ and $1077\le t\le7217$) or ($r=8$ and $t\ge1058$) or ($r=9$ and $t\ge1039$).}
    \label{fig:T_t89}
\end{figure}

In fact, the overall bound $t\ge1195$ in Theorem \ref{thm:main} is best possible because the $t$-minimal tree for $t=1194$ is a bit different. Our computer calculations show that it is soon after $t=1000$ that we obtain structures as given in the theorem (see the caption of the figures for a detailed information), but it is only after 1195 when we obtain this for all values of $t$ modulo 10.

The ``evolution'' of the structure of $t$-minimal trees is as follows:

\begin{itemize}
\item[(i)]
For $t\le18$, $t$-minimal trees are stars.
\item[(ii)]
For $19\le t\le 35$, $t$-minimal trees are balanced double stars, see \cite{GoMaNoGu15}.
\item[(iii)]
From Goubko et al.\ \cite{GoMaNoGu15}, Lin et al.\ \cite{Lin_etal_16} and the calculations of this paper it follows that for $t\ge 36$, stars and double stars no longer occur. However, $t$-minimal trees have a mixed vertex for values of $t$ between 36 and 1194 (see definition of mixed vertices in the next section), except for those values that are indicated in captions of Figures \ref{fig:T_t} and~\ref{fig:T_t89}.
\item[(iv)]
For all values $t\ge1195$, $t$-minimal trees are described by our Theorem \ref{thm:main}.
\end{itemize}

\section{Proof of Theorem \ref{thm:main}}
\label{sect:2}

In our main proof, we will use the following results that are either well-known or easy to prove.

\begin{lemma}
\label{lem:1}
Let $k$ be a fixed positive integer. The function $f_k(d) = f(d,k)$ has the following properties:
\begin{itemize}
\item[\rm (1)]
$f(d,1)=\sqrt{1-\tfrac{1}{d}}$ is increasing in $d$ and $1-\tfrac{1}{d} < f(d,1) < 1-\tfrac{1}{2d}$.
\item[\rm (2)]
$f(d,2)=\sqrt{\tfrac{1}{2}}$ is independent of $d$.
\item[\rm (3)]
$f(d,k)=\sqrt{\tfrac{1}{k}+(1-\tfrac{2}{k})\tfrac{1}{d}\,}$ is decreasing in $d$ for every fixed $k\ge3$ and
$$\sqrt{\tfrac{1}{k}}\,\big(1+\tfrac{k-2}{2d}\big) < f(d,k) < \sqrt{\tfrac{1}{k}}\,\big(1+\tfrac{k-2}{d}\big).$$
\end{itemize}
\end{lemma}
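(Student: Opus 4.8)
All three statements come from specialising $f(d_u,d_v)=\sqrt{(d_u+d_v-2)/(d_ud_v)}$ at $d_u=d$, $d_v=k$ and then doing elementary one–variable calculus, so I would anchor everything on the single identity
\[
  \frac{d+k-2}{dk}=\frac1k+\Bigl(1-\frac2k\Bigr)\frac1d .
\]
Substituting this into the square root yields the closed forms displayed in (1), (2) and (3) directly. The monotonicity claims then need no work beyond the remark that, for constants $c_1\ge0$ and $c_2$, the map $d\mapsto\sqrt{c_1+c_2/d}$ is increasing, constant, or decreasing on $(0,\infty)$ precisely when $c_2<0$, $c_2=0$, or $c_2>0$; here $c_2=1-\tfrac2k$ equals $-1$ when $k=1$, equals $0$ when $k=2$, and is positive for every $k\ge3$. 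This settles the ``increasing/constant/decreasing'' parts of (1)–(3) simultaneously and also gives (2) in full.

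It remains to establish the two sandwich inequalities. For (1) I would use $f(d,1)=\sqrt{1-\tfrac1d}$: since $0<1-\tfrac1d<1$ for $d\ge2$ and $y<\sqrt y$ on $(0,1)$, the lower bound $1-\tfrac1d<f(d,1)$ is immediate, while the upper bound follows by squaring the positive quantity $1-\tfrac1{2d}$, as $\bigl(1-\tfrac1{2d}\bigr)^2=1-\tfrac1d+\tfrac1{4d^2}>1-\tfrac1d$. For (3) I would factor $f(d,k)=k^{-1/2}\sqrt{1+\tfrac{k-2}{d}}$ and set $t=\tfrac{k-2}{d}>0$, so that the desired estimate becomes a two–sided bound on $\sqrt{1+t}$ by linear expressions in $t$ (equivalently, linear in $1/d$); comparing $\sqrt{1+t}$ with $1+t$ and with $1+\tfrac t2$ and squaring each side (all quantities being positive) reduces both inequalities to polynomial inequalities in $t$ that are transparent for $t>0$.

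I do not expect any real obstacle: this is precisely the sort of ``well known or easy'' fact the paper advertises it to be. The only points needing a little care are (a) confirming nonnegativity of both sides before squaring each inequality, (b) remembering the implicit hypothesis $d\ge2$ that makes the lower bound in (1) strict, and (c) keeping the direction of the $\sqrt{1+t}$ comparisons straight so that the first–order correction term ends up on the correct side of $f(d,k)$ in (3).
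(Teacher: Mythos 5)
Your treatment of (1), (2), the monotonicity claims, and the upper bound in (3) is fine: the identity $\tfrac{d+k-2}{dk}=\tfrac1k+(1-\tfrac2k)\tfrac1d$ plus the sign of $c_2=1-\tfrac2k$ settles everything except the sandwich bounds, and the bound $\sqrt{1+t}<1+t$ (and the squaring argument in (1)) are correct. The paper offers no proof of this lemma at all, so there is nothing to compare against on those parts.

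The genuine problem is the lower bound in (3), precisely the point you defer to "keeping the direction of the $\sqrt{1+t}$ comparisons straight." Carrying out your own squaring step gives $\bigl(1+\tfrac t2\bigr)^2=1+t+\tfrac{t^2}{4}>1+t$, hence $\sqrt{1+t}<1+\tfrac t2$ for every $t>0$ --- the \emph{opposite} of what the stated lower bound requires. So the claim that both inequalities reduce to "transparent" polynomial inequalities is false for the lower one; your proposed proof cannot close this step. In fact the inequality as printed in the lemma is not true: for $k=11$, $d=100$ one has $f(100,11)=\sqrt{109/1100}\approx 0.31479$ while $\sqrt{1/11}\,(1+\tfrac{9}{200})\approx 0.31508$. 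The quantity $\sqrt{1/k}\,(1+\tfrac{k-2}{2d})$ is an \emph{upper} bound for $f(d,k)$, which is consistent with how the paper actually uses the estimate later (Lemma \ref{lem:10} invokes $f(k,d)=\sqrt{1/k}\,(1+\tfrac{k-2}{2d}-O(d^{-2}))$, with a negative second-order correction). A correct and equally elementary two-sided bound, provable by your squaring method, is $\sqrt{1/k}\,\bigl(1+\tfrac{k-2}{2d}-\tfrac{(k-2)^2}{8d^2}\bigr)<f(d,k)<\sqrt{1/k}\,\bigl(1+\tfrac{k-2}{2d}\bigr)$. You should either prove that corrected statement or flag the misstatement explicitly; as written, the proposal asserts it will verify an inequality that is false.
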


%\begin{lemma}[\cite{ref18}]\label{lem:1a}
%Let $x,y\geq 2$ be integers and let $a\geq 0$ and $b$ $(0\leq b<y-1)$ be constants.
%Let $g(x,y)= f(x+a,y-b) - f(x,y)$. Then, $g(x,y)$ is increasing in $x$ and decreasing in $y$.
%\end{lemma}

Next we will discuss basic properties of $t$-minimal trees.

\begin{lemma}[\cite{Lin_etal_16}]
\label{lem:2}
No $t$-minimal tree has a vertex of degree 2.
\end{lemma}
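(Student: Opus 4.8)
The plan is a standard local-surgery argument. Suppose for contradiction that $T$ is a $t$-minimal tree containing a vertex $v$ of degree $2$, and let $u,w$ be its two neighbours. I would form a new tree $T'$ by \emph{suppressing} $v$: delete $v$ and insert the edge $uw$. Since $T$ is a tree, $u$ and $w$ are joined only through the path $uvw$, so $uw\notin E(T)$ and $T'$ is again a tree, with one vertex and one edge fewer than $T$. The key observation is that the degree of every vertex other than $v$ is unchanged (at $u$ the edge $uv$ is replaced by $uw$, and similarly at $w$), while $v$ itself is not a leaf; hence $T'$ has exactly the same set of leaves as $T$, and in particular $t$ of them.

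Next I would compare the two ABC-indices. The only edges whose contributions change are $uv$ and $vw$ (deleted) and $uw$ (inserted). By Lemma~\ref{lem:1}(2), the two deleted edges together contribute $f(d_u,2)+f(2,d_w)=\sqrt{\tfrac12}+\sqrt{\tfrac12}=\sqrt2$. For the new edge, $f(d_u,d_w)^2=\tfrac1{d_u}+\tfrac1{d_w}-\tfrac{2}{d_u d_w}<\tfrac1{d_u}+\tfrac1{d_w}\le 2$, using $d_u,d_w\ge1$, so $f(d_u,d_w)<\sqrt2$. Therefore $\abc(T')<\abc(T)$, which contradicts the $t$-minimality of $T$ and proves the lemma.

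I do not anticipate a genuine obstacle here; the argument is entirely elementary. The only points that deserve a moment's care are the (easy) checks that the suppression keeps the graph a tree and does not change the leaf count, and the verification that the strict inequality $f(d_u,d_w)<\sqrt2$ holds for \emph{all} admissible degree pairs, including the cases where $u$ or $w$ is itself a leaf — which the one-line computation above settles. One could equivalently phrase the surgery as suppressing a whole maximal path of degree-$2$ vertices at once, but eliminating a single degree-$2$ vertex already yields the contradiction.
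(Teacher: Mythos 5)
Your argument is correct and complete: suppressing the degree-$2$ vertex preserves the tree property and the leaf count, and the strict inequality $f(d_u,d_w)<\sqrt{2}=f(d_u,2)+f(2,d_w)$ holds for all admissible degrees, so the ABC-index strictly decreases. The paper itself gives no proof of this lemma (it is quoted from Lin et al.), but your suppression argument is the standard one and there is nothing to add.
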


\begin{lemma}[\cite{AHM17}]
\label{lem:similar}
Let $uv$ and $u'v'$ be edges of a tree $T$. Let $T_v$ ($T_{v'}$) be the component of $T-uv$ ($T-u'v'$) containing the vertex $v$ ($v'$).  Suppose that $T_v\cap T_{v'}=\emptyset$, and let $T'$ be the tree obtained from $T$ by exchanging the subtrees $T_v$ and $T_{v'}$.

{\rm (a)}
If $d_u>d_{u'}$ and $d_v<d_{v'}$, then $\abc(T)>\abc(T')$. In particular, $T$ is not minimal.

{\rm (b)}
If $d_u=d_{u'}$ or $d_v=d_{v'}$, then $\abc(T) = \abc(T')$.
\end{lemma}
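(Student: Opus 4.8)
\bigskip
\noindent\textbf{Plan of proof.}
The idea is to track exactly how $\abc$ changes under the subtree exchange. First I would check that $T'$ is a well-defined tree with the same degree sequence as $T$. In the non-degenerate situation (where $uv\ne u'v'$, and hence also $u\ne u'$, $u\ne v'$, $v\ne v'$, since otherwise one of $uv,u'v'$ would be a second edge joining $T_v$, resp.\ $T_{v'}$, to the rest of $T$), the hypothesis $T_v\cap T_{v'}=\emptyset$ forces $u\notin T_{v'}$ and $u'\notin T_v$. Then $T'=T-uv-u'v'+uv'+u'v$ is connected with $|V(T)|$ vertices and $|V(T)|-1$ edges, hence a tree, and the exchange leaves every vertex degree unchanged ($u$ trades the neighbour $v$ for $v'$, $u'$ trades $v'$ for $v$, while $v,v'$ keep all neighbours inside their own subtrees). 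In particular $T'$ has the same number of leaves as $T$, and the only edges whose ABC-contributions differ in $T$ and $T'$ are $uv,u'v'$ versus $uv',u'v$, so
$$\abc(T)-\abc(T')=\bigl[f(d_u,d_v)-f(d_u,d_{v'})\bigr]-\bigl[f(d_{u'},d_v)-f(d_{u'},d_{v'})\bigr].\qquad(\star)$$

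Part (b) is then immediate: if $d_u=d_{u'}$ or $d_v=d_{v'}$, the right-hand side of $(\star)$ vanishes (the four terms cancel in pairs), so $\abc(T)=\abc(T')$.

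For part (a), I would show the right-hand side of $(\star)$ is positive when $d_u>d_{u'}$ and $d_v<d_{v'}$. Put $\delta(y)=f(d_u,y)-f(d_{u'},y)$, so that $(\star)$ equals $\delta(d_v)-\delta(d_{v'})$; it therefore suffices to prove $\delta$ is strictly decreasing on $[1,\infty)$, which (together with $d_u>d_{u'}$) follows from $\partial^2 f/\partial x\,\partial y<0$ for $x,y\ge1$. Writing $f(x,y)=\sqrt{F(x,y)}$ with $F(x,y)=\tfrac{x+y-2}{xy}$, one computes $F_x=\tfrac{2-y}{x^2y}$, $F_y=\tfrac{2-x}{xy^2}$, $F_{xy}=-\tfrac{2}{x^2y^2}$, whence
$$\frac{\partial^2 f}{\partial x\,\partial y}=\frac{2FF_{xy}-F_xF_y}{4F^{3/2}}=\frac{8-(x+2)(y+2)}{4\,x^3y^3\,F^{3/2}}<0\qquad(x,y\ge1,\ (x,y)\ne(1,1)),$$
since then $(x+2)(y+2)\ge9>8$ and $F>0$. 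Hence $\delta$ is strictly decreasing, $\delta(d_v)>\delta(d_{v'})$, and $\abc(T)>\abc(T')$; as $T'$ has the same number of leaves, $T$ is not $t$-minimal.

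The points needing care are the bookkeeping that $T'$ genuinely is a tree with the same degrees (dispatch the degenerate configurations listed above first — each either makes the statement vacuous or places it under part (b)), and the non-smooth boundary where a degree equals $1$: then the relevant vertex is a leaf, $F$ degenerates at the corner $(1,1)$, and $f(\cdot,1)$ has infinite slope at $1$. These can be treated by a limiting argument, or — cleaner, since all degrees are integers — by telescoping $(\star)$ over consecutive integer values of $d_u$ and of $d_{v'}$, which reduces everything to the single elementary inequality $f(m{+}1,\ell)+f(m,\ell{+}1)>f(m{+}1,\ell{+}1)+f(m,\ell)$ for integers $m,\ell\ge1$ (negativity of the second mixed difference of $f$), verified directly. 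I expect this boundary/telescoping bookkeeping, rather than the one-line partial-derivative computation, to be the only real chore.
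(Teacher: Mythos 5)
The paper does not actually prove this lemma---it is quoted from \cite{AHM17}---but your argument is the standard one for such exchange lemmas and is correct: the identity $(\star)$ together with strict negativity of $\partial^2 f/\partial x\,\partial y$ (equivalently, of the second mixed difference of $f$) yields both parts, and your computation of the mixed partial checks out. Two harmless remarks: $uv\ne u'v'$ does not by itself force $u\ne u'$, but if $u=u'$ then $d_u=d_{u'}$, so that configuration falls under part (b) where $(\star)$ trivially vanishes; and in part (a) the vertices $u,u'$ both lie in the connected ``middle'' part $T\setminus(T_v\cup T_{v'})$, whence $d_u>d_{u'}\ge 2$, so the singular corner $(1,1)$ never arises and the limiting/telescoping precautions you flag are not actually needed.
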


Part (b) of the lemma motivates the following definition. We say that trees $T$ and $T'$ are \emph{similar} (or \emph{ABC-similar}) if $T'$ can be obtained from $T$ by a series of exchange operations, each of which satisfies one or the other equality of degrees in Lemma \ref{lem:similar}(b). Note that similarity is an equivalence relation that preserves the ABC-index.  In order to characterize $t$-minimal trees, it suffices to describe one element in each similarity class.

We will classify vertices of a tree into the following types:
\begin{itemize}
\item[\rm (L)]
A vertex of degree $1$ is a \emph{leaf}.
\item[\rm (R)]
A vertex is a \emph{root} if it is not adjacent to any leaf.
\item[\rm (S)]
A vertex of degree $d>1$ is a \emph{star vertex} if it is adjacent to $k\ge d-1$ leaves.\footnote{Note that $k=d-1$ unless the tree is a star.} A star vertex together with all adjacent leaves is a subtree and is said to be an \emph{$S_k$-branch} or an \emph{$S$-branch} of \emph{order} $k$.
\item[\rm (M)]
A vertex is a \emph{mixed vertex} if it is adjacent to at least one leaf and to at least two non-leaf vertices.
\end{itemize}

\begin{lemma}
\label{lem:4}
If $T$ is a $t$-minimal tree, then it is similar to a tree with at most one mixed vertex.
\end{lemma}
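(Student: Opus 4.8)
The plan is to induct on the number of mixed vertices of a $t$-minimal tree. Suppose $T$ is $t$-minimal and has at least two mixed vertices; pick two of them, $u$ and $w$, let $P$ be the $u$--$w$ path in $T$, let $p$ be the neighbour of $w$ on $P$ and $q$ the neighbour of $u$ on $P$ (both are non-leaves, with $p=u$ and $q=w$ when $u\sim w$). Since $u$ is mixed it has a leaf neighbour $x$ and a non-leaf neighbour $a\neq q$; since $w$ is mixed it has a leaf neighbour $y$ and a non-leaf neighbour $b\neq p$. Let $A$ (resp.\ $B$) be the component of $T-ua$ (resp.\ $T-wb$) containing $a$ (resp.\ $b$). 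By the choice of $a$ and $b$, the sets $\{x\},\{y\},A,B$ are pairwise disjoint and $A,B$ avoid $P$.

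The first step is to show $d_u=d_w$. Exchanging $\{x\}$ and $B$ along the edges $ux$ and $wb$ yields a tree with the same degree sequence, in particular with $t$ leaves; since $d_x=1<d_b$, Lemma \ref{lem:similar}(a) would produce an ABC-index strictly below $\abc(T)$ if $d_u>d_w$, contradicting $t$-minimality. Exchanging $\{y\}$ and $A$ along $wy$ and $ua$ rules out $d_w>d_u$ in the same way. Hence $d_u=d_w$, and Lemma \ref{lem:similar}(b) then shows that exchanging any leaf neighbour of $u$ with the pendant subtree rooted at any non-leaf neighbour of $w$ other than $p$ is a similarity operation (the degree equality $d_u=d_w$ holds), so it preserves $\abc$ and the degree sequence.

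The heart of the proof is a leaf-pushing process: starting from $T$, repeatedly perform such an exchange as long as one is available. Each step is a similarity operation, hence freezes all vertex degrees — in particular $d_u=d_w$ persists, so every subsequent exchange remains legitimate. One then checks the routine bookkeeping: each step leaves the path $P$ (and the vertex $p$) untouched, removes one leaf neighbour of $u$ while adding one non-leaf neighbour to $u$, removes one non-leaf neighbour of $w$ while adding one leaf neighbour to $w$, and alters the neighbourhood of no other vertex except the root of the moved subtree, whose type is unaffected since it merely swaps one non-leaf parent for another. In particular the number of mixed vertices never increases. The process must terminate, and it can only do so when $u$ has no remaining leaf neighbour — so $u$ has become a root — or when $p$ is the only non-leaf neighbour of $w$ — so $w$, which by then has at least one leaf neighbour, has become a star vertex. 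Either way the resulting tree $T^{\ast}$ is similar to $T$, hence again $t$-minimal, and has strictly fewer mixed vertices; by the induction hypothesis $T^{\ast}$, and therefore $T$, is similar to a tree with at most one mixed vertex.

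The only genuine obstacle is keeping that last bookkeeping honest: one must verify that the iteration never changes a vertex degree (so that $d_u=d_w$, the hypothesis making every exchange neutral for the ABC-index, is maintained), that the subtree removed from $w$ is always taken off the $u$--$w$ path, and that the only vertices whose classification can change are $u$ and $w$, each of which can only leave the class of mixed vertices. Establishing $d_u=d_w$ at the outset via the strict inequality of Lemma \ref{lem:similar}(a) is precisely what licenses all of this; for unequal degrees an exchange would move the ABC-index and the iteration could not be sustained.
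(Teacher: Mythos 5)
Your proof is correct and follows essentially the same route as the paper's: use Lemma \ref{lem:similar}(a) with a leaf versus a non-leaf pendant subtree to force any two mixed vertices to have equal degree, then repeatedly apply the similarity exchange of Lemma \ref{lem:similar}(b) to drain the leaves from one of them until it becomes a root or the other becomes a star. Your version merely spells out the disjointness and bookkeeping details that the paper leaves implicit.
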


\begin{proof}
Suppose that $T$ is $t$-minimal and that it has two mixed vertices, $u$ and $u'$. If $u$ and $u'$ have different degrees, then we can make an exchange of a leaf at one of them with a larger degree subtree at the other vertex as in Lemma \ref{lem:similar}(a) and obtain a contradiction to minimality of $T$. If $d_u=d_{u'}$, then we can exchange leaves at one of these vertices with non-leaf subtrees (using the similarity exchange) and make one of $u$ and $u'$ either a star or a root vertex. By repeating this process, we arrive at a similar tree with at most one mixed vertex.
\end{proof}

In the proofs below, we will compare the ABC-index of a tree $T$ with that of a modified tree $T'$. To make the notation shorter we will write
\[\Delta(T,T') = \abc(T) - \abc(T').\]

\begin{lemma}
\label{lem:5}
If $T$ is a $t$-minimal tree, then it has at most one root.
\end{lemma}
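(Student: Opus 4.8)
The plan is to argue by contradiction using the same exchange/surgery philosophy that underlies Lemmas~\ref{lem:similar} and~\ref{lem:4}, but now across two distinct root vertices. Suppose $T$ is $t$-minimal and has two roots $u$ and $w$. Since a root is not adjacent to any leaf, every neighbor of $u$ (and of $w$) lies on a path leading to leaves further away. The idea is to look at the path $P$ in $T$ joining $u$ and $w$, pick an edge of $P$ incident with $u$ (say $uu_1$, where $u_1$ is the neighbor of $u$ on $P$) and an edge incident with $w$ (say $ww_1$), and perform a subtree exchange as in Lemma~\ref{lem:similar}. The branches hanging off $u$ on the side away from $w$ and those hanging off $w$ on the side away from $u$ are vertex-disjoint, so the exchange hypothesis $T_v\cap T_{v'}=\emptyset$ is available; the point is to choose which subtrees to swap so that we either strictly decrease the ABC-index (contradicting minimality) or collapse one of the two roots into a non-root vertex.

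First I would record the degree inequalities we get to use for free: since $u,w$ are roots, $d_u,d_w\ge 3$ by Lemma~\ref{lem:2} (no degree-2 vertices), and all neighbors of $u$ other than $u_1$, as well as all neighbors of $w$ other than $w_1$, are non-leaves, hence have degree $\ge 3$ as well (again by Lemma~\ref{lem:2}, a neighbor that is not a leaf has degree $\ge 3$). Now split into cases on $d_u$ versus $d_w$. If $d_u\ne d_w$, say $d_u>d_w$: I would try to find a subtree $T_v$ hanging at $u$ (rooted at a neighbor $v\ne u_1$) and a subtree $T_{v'}$ hanging at $w$ (rooted at a neighbor $v'\ne w_1$) with $d_v\le d_{v'}$, or after relabeling $d_v<d_{v'}$, so that Lemma~\ref{lem:similar}(a) applies with the pair $(u,v)$ and $(w,v')$ — this gives $\abc(T)>\abc(T')$, a contradiction. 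The subtle point is that we need such a pair to exist: if \emph{every} branch at $u$ had strictly larger root-degree than \emph{every} branch at $w$, we would instead swap in the other direction, moving a large-degree branch from $u$ to $w$; since $d_u>d_w$, attaching the larger branch at the smaller-degree vertex $w$ and the smaller branch at $u$ is the favorable direction, and Lemma~\ref{lem:similar}(a) (applied with roles of $u,w$ appropriately assigned) again yields a strict decrease. Either way, $d_u\ne d_w$ is impossible.

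So we may assume $d_u=d_w$. Here Lemma~\ref{lem:similar}(b) tells us that swapping any branch at $u$ with any branch at $w$ produces an ABC-\emph{similar} tree (the $d_u=d_{u'}$ case), so we are free to rearrange branches between $u$ and $w$ without changing the ABC-index. The goal now is to use these similarity moves to make one of the two roots into a non-root — i.e., to route all the ``leaf-producing'' structure so that, say, $w$ becomes adjacent to a leaf (turning it into a mixed or star vertex) or so that the two roots merge in effect. Concretely, I would move branches around so that some leaf ends up adjacent to $w$; after such a rearrangement $w$ is no longer a root, so the modified (similar) tree has strictly fewer roots, and iterating drives the root count down to at most one. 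One has to make sure a rearrangement reducing the number of roots is actually available: if both $u$ and $w$ are roots, the tree contains leaves somewhere, and a shortest path from $w$ to a leaf $\ell$ passes through a branch at $w$; exchanging a suitable pendant subtree near $\ell$ with a branch at $w$ (using equal-degree similarity at the relevant vertices, cascading as in the proof of Lemma~\ref{lem:4}) brings a leaf closer to $w$. The main obstacle I expect is bookkeeping in this last step: verifying that the cascade of similarity exchanges can always be arranged so that an equality of degrees (Lemma~\ref{lem:similar}(b)) holds at \emph{each} exchange along the way, and that the process genuinely terminates with a tree having at most one root rather than oscillating. This is exactly the type of argument already used for mixed vertices in Lemma~\ref{lem:4}, so I would model the termination argument on that proof, using a potential function such as the total distance from $w$ to its nearest leaf (or the number of roots, broken by such a secondary quantity) that strictly decreases under the chosen moves.
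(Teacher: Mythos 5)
There is a genuine gap in both of your cases, and the key ideas of the paper's proof are missing. In the case $d_u\ne d_w$ (say $d_u>d_w$), Lemma~\ref{lem:similar}(a) produces a strict decrease only if you can exhibit a branch at $u$ whose root has \emph{strictly smaller} degree than the root of some branch at $w$. Since every branch hanging at a root is rooted at a non-leaf, nothing forces such a pair to exist: all branch roots at both $u$ and $w$ could have equal degree, in which case every exchange falls under part (b) and changes nothing. Your fallback of ``swapping in the other direction'' is not licensed by Lemma~\ref{lem:similar}(a), since reversing the roles would require $d_w>d_u$. The paper escapes exactly this trap by manufacturing a degree-$1$ subtree to exchange: it first rules out \emph{adjacent} roots by contracting the edge between them (a move that is not an exchange at all --- the contracted vertex has larger degree, and every term of $\Delta(T,T')$ is positive by Lemma~\ref{lem:1}(3) because all neighbors of a root have degree at least $3$), and then observes that two non-adjacent roots force a mixed vertex $z$ between them whose pendant leaf supplies the strict inequality $1<d_{v'}$ needed for the exchange argument. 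Your proposal has no mechanism for the concrete configuration of two adjacent roots of equal degree with all branch roots of equal degree, which only the contraction handles.

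In the case $d_u=d_w$, swapping whole branches between $u$ and $w$ can never make $w$ adjacent to a leaf, because every branch at a root is rooted at a non-leaf; and exchanging a leaf from deeper in the tree with a branch at $w$ is a similarity move only if the leaf's support vertex happens to have degree exactly $d_w$, which need not hold. So the cascade you describe is not available in general, and you correctly flag but do not resolve this. Moreover, even if the cascade worked, it would only show that $T$ is \emph{similar} to a tree with fewer roots, which is weaker than the statement being proved: Lemma~\ref{lem:5} asserts that $T$ itself has at most one root (contrast with Lemma~\ref{lem:4}, which is deliberately phrased only up to similarity). The paper's treatment of the equal-degree subcase instead performs a similarity exchange to produce a $t$-minimal tree with two \emph{adjacent} roots and then invokes the contraction contradiction --- a step that depends on having first established the adjacent-roots impossibility, which your outline never proves.
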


\begin{proof}
Let us first prove that no two root vertices can be adjacent. Suppose this is not the case and that $x,y$ are two adjacent roots. Let $x_1,\dots, x_{d_x-1}$ be the neighbors of $x$ different from $y$ and let $y_1,\dots, y_{d_y-1}$ be the neighbors of $y$ different from $x$. Let $T'$ be the tree obtained from $T$ by contracting the edge $xy$ and let $w$ be the contracted vertex. Then
\begin{eqnarray*}
% \nonumber % Remove numbering (before each equation)
  \Delta(T,T') &=& f(d_x,d_y) +  \\
  & & \sum_{i=1}^{d_x-1} (f(d_x,d_{x_i})-f(d_w,d_{x_i})) + \sum_{j=1}^{d_y-1} (f(d_y,d_{y_j})-f(d_w,d_{y_j})).
\end{eqnarray*}
Each of the terms is positive by Lemma \ref{lem:1}(3). This yields a contradiction to minimality of $T$.

Thus, if there are two roots $x,y$ in $T$, there must be a mixed vertex $z$ and both roots are adjacent to $z$. We may assume that $d_x\ge d_y$. Let $z_0$ be a leaf adjacent to $z$. By using Lemma \ref{lem:similar}(a) we conclude that $d_z\le d_y$ since $z$ has a neighbor of degree 1. Also, if $d_z=d_y$, we can perform similarity exchanges at $z$ and $y$ so that $z$ becomes a root vertex and $y$ becomes a mixed vertex. However, this yields a contradiction since we would obtain a $t$-minimal tree with two adjacent roots. We conclude that $d_x\ge d_y > d_z$.

Let us now consider the degree of $x_1$. Since $d_x > d_z$, every neighbor of $z$ different from $x$ has degree at most $d_{x_1}$ by Lemma \ref{lem:similar}(a). In particular, $d_y\le d_{x_1}$.
%If equality holds, we could make a similarity exchange and obtain the situation with the root vertices $x$ and $y$ adjacent. This would be a contradiction as proved above. Thus, $d_y < d_{x_1}$.
Thus, $d_z < d_{x_1}$. However, this gives a contradiction by Lemma \ref{lem:similar}(a) since $x_1$ is adjacent to a vertex of degree 1 and $z$ is adjacent to $y$ whose degree is more than 1. This contradiction completes the proof.
\end{proof}

\begin{lemma}
\label{lem:7}
Let $T$ be a $t$-minimal tree. If a vertex $v$ has as neighbors an $S_k$-branch and an $S_l$-branch, then $|k-l|\le 1$.
\end{lemma}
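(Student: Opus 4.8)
The plan is to argue by contradiction using a single ``leaf migration'' between the two branches, which reduces the whole statement to the convexity of an explicit one-variable function. Suppose the conclusion fails; by symmetry we may assume $k\ge l+2$. Let $x$ and $y$ be the star vertices of the $S_k$-branch and the $S_l$-branch at $v$. The claim is vacuous if $T$ is a star, so we may assume $d_x=k+1$ and $d_y=l+1$; moreover $d_v\ge 3$ by Lemma~\ref{lem:2}, and $l\ge1$ since $y$ is a star vertex, hence $k\ge3$. Let $T'$ be obtained from $T$ by detaching one leaf from $x$ and attaching it to $y$. Since $k\ge3$, the vertex $x$ still has degree $k\ge2$ in $T'$, so $T'$ is a tree with the same number $t$ of leaves, and it suffices to show $\Delta(T,T')>0$, contradicting $t$-minimality.

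Only the two branches and the two edges $vx,vy$ are affected by the migration. Writing
\[
  h(m)\ :=\ m\,f(m+1,1)\ +\ f(d_v,\,m+1)
\]
for the total ABC-contribution of an $S_m$-branch hanging at $v$ (its $m$ leaf-edges together with the edge from its star vertex to $v$), one gets
\[
  \Delta(T,T')\ =\ \bigl(h(k)+h(l)\bigr)-\bigl(h(k-1)+h(l+1)\bigr)\ =\ \bigl(h(k)-h(k-1)\bigr)-\bigl(h(l+1)-h(l)\bigr).
\]
So the proof reduces to showing that $h$, extended to a twice-differentiable function on $[1,\infty)$, is strictly convex: then $m\mapsto h(m)-h(m-1)$ is strictly increasing, and since $k\ge l+2$ gives $k>l+1$, the last expression is positive.

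For the convexity I would split $h=g+\varphi$ with $g(m)=m\sqrt{m/(m+1)}$ and $\varphi(m)=f(d_v,m+1)$. A routine differentiation gives $g''(m)=\tfrac34\,m^{-1/2}(m+1)^{-5/2}>0$. By Lemma~\ref{lem:1}(3), $\varphi(m)=\sqrt{\tfrac{1}{d_v}+(1-\tfrac{2}{d_v})\tfrac{1}{m+1}}$, i.e.\ $\varphi$ is the square root of a positive affine function of $1/(m+1)$ with strictly positive coefficients (here one uses $d_v\ge 3$), and a short calculation shows $\varphi''>0$. Hence $h''>0$ on $[1,\infty)$, which is exactly what was needed, and we obtain $k\le l+1$. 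Running the same argument with the roles of $x$ and $y$ interchanged (migrating a leaf from $y$ to $x$) yields $l\le k+1$, so $|k-l|\le 1$.

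The only real work is the two second-derivative computations, and the delicate point is that the edge $vx$ contributes $f(d_v,m+1)$, which \emph{decreases} in $m$, so it is not a priori clear that adding it preserves the convexity of $g$; the computation shows it does, and this is where the hypothesis $d_v\ge3$ from Lemma~\ref{lem:2} is used. One must also check that the leaf migration is legitimate, i.e.\ that the star vertex losing a leaf does not drop to degree $1$, which is guaranteed precisely by $k\ge3$ (equivalently, by $k\ge l+2$ together with $l\ge1$).
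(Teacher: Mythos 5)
Your proof is correct, and while it uses exactly the same exchange as the paper---detach one leaf from the $S_k$-branch and attach it to the $S_l$-branch, yielding the same expression for $\Delta(T,T')$---the way you establish positivity is genuinely different. The paper first invokes Lemma~\ref{lem:similar}(a) to get $d_v\ge k+1$, argues that the four $f(d_v,\cdot)$ terms are increasing in $d_v$ so that one may assume $d_v=k+1$, then that the expression is decreasing in $l$ so that one may assume $l=k-2$, and finally that the resulting one-variable function of $k$ is decreasing with limit $0$. You instead recognize $\Delta(T,T')=\bigl(h(k)-h(k-1)\bigr)-\bigl(h(l+1)-h(l)\bigr)$ as a difference of consecutive first differences of the branch-contribution function $h(m)=m\,f(m+1,1)+f(d_v,m+1)$ and reduce everything to $h''>0$. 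I checked both second-derivative computations: indeed $g''(m)=\tfrac34\,m^{-1/2}(m+1)^{-5/2}$, and writing $\varphi(m)=\sqrt{a+b/(m+1)}$ with $a=1/d_v$, $b=(d_v-2)/d_v$ one gets $\varphi''(m)=\tfrac{b}{2}\bigl[\tfrac32 w^{-5/2}(aw+b)^{-1/2}+\tfrac{a}{2}w^{-3/2}(aw+b)^{-3/2}\bigr]>0$ with $w=m+1$, so the convexity claim holds (and even $d_v=2$ would be harmless, since then $\varphi''\equiv 0$ while $g''>0$). Your route buys self-containedness: it needs no lower bound on $d_v$ from Lemma~\ref{lem:similar}(a), and it replaces three separate monotonicity claims plus a limiting argument by a single convexity statement that holds uniformly in $d_v$. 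The housekeeping (non-star case, $l\ge1$, hence $k\ge3$ so the donor star does not collapse to a leaf and the leaf count is preserved) is also handled correctly.
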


\begin{proof}
Suppose that $k\ge l+2$. Let $d=d_v$. Lemma \ref{lem:similar}(a) implies that $d\ge k+1$.
By detaching a leaf from $S_k$ and attaching it to $S_l$ we obtain a tree $T'$ in which $S_k$ is replaced by $S_{k-1}$ and $S_l$ with $S_{l+1}$. We have:
\begin{eqnarray*}
  \Delta(T,T') &=&  f(d,k+1)+f(d,l+1) - f(d,k) - f(d,l+2) +  \\
   & & k f(k+1,1) + l f(l+1,1) - (k-1)f(k,1) - (l+1) f(l+2,1).
\end{eqnarray*}
For any fixed $k$ and $l<k-1$, the value of the first four terms on the right-hand side of the above equality is increasing in terms of $d$, so it suffices to treat the case when $d$ is minimum possible, $d=k+1$. Then, for any fixed $k$, the value is decreasing in $l$, so we may assume that $l=k-2$. In other words,
\begin{eqnarray*}
  \Delta(T,T') &\ge&  f(k+1,k+1) + f(k+1,k-1) - 2 f(k+1,k)+ \\
  && k f(k+1,1) + (k-2) f(k-1,1) - 2(k-1)f(k,1).
\end{eqnarray*}
The right-hand side is decreasing in terms of $k$ and is 0 in the limit when $k\to \infty$. This shows that $\Delta(T,T') > 0$, which is a contradiction to the $t$-minimality of $T$.
\end{proof}

When $t\le18$, $t$-optimal trees are stars. For $19\le t\le 35$ then they become balanced double stars, see \cite{GoMaNoGu15}. It follows from Goubko et al. \cite{GoMaNoGu15} and Lin et al. \cite{Lin_etal_16} that for $t\ge 36$, stars and double stars no longer occur. Therefore, there exists a root or a mixed vertex. As a corollary, our lemmas above imply the following.

\begin{corollary}
\label{cor:structure intermediate}
If $T$ is a $t$-minimal tree and $t\ge 36$, then $T$ has either one mixed vertex, one root, or one mixed vertex and one root that are adjacent, and all other vertices are stars and leaves.
\end{corollary}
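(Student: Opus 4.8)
The plan is to assemble the structural lemmas proved above into a single picture of how a $t$-minimal tree looks once $t \ge 36$. First I would invoke the known classification for small $t$: for $t \le 35$ the $t$-minimal trees are stars or balanced double stars, and by the cited results of Goubko et al.\ and Lin et al.\ neither of these occurs for $t \ge 36$. A tree that is neither a star nor a double star must contain a vertex that is not a leaf, not a star vertex, and not a leaf's unique non-leaf neighbor in a double star; concretely, it must contain a vertex adjacent to two or more non-leaves. Such a vertex is, by the type classification, either a root (if it touches no leaf) or a mixed vertex (if it touches at least one leaf). Hence for $t \ge 36$ the tree has at least one root or at least one mixed vertex.

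Next I would pin down how many of each there can be. Lemma \ref{lem:4} gives that $T$ is similar to a tree with at most one mixed vertex, and similarity preserves minimality, so we may assume $T$ itself has at most one mixed vertex. Lemma \ref{lem:5} gives at most one root. This already forces $T$ to be of one of four shapes: (a) exactly one mixed vertex and no root; (b) exactly one root and no mixed vertex; (c) exactly one mixed vertex and exactly one root; or (d) neither — which is excluded by the previous paragraph. All vertices not of type (M) or (R) are, by the type classification, either leaves or adjacent to a leaf; and a non-leaf adjacent to a leaf that is neither mixed nor a root is precisely a star vertex (it is adjacent to at most one non-leaf, and since there is no vertex of degree $2$ by Lemma \ref{lem:2}, and the tree is connected on $t \ge 36$ leaves, one checks it is adjacent to $d-1$ leaves). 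So in cases (a)--(c) every other vertex is a star vertex or a leaf.

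It remains to rule out the configuration in case (c) in which the unique root and the unique mixed vertex are \emph{not} adjacent. Here I would argue along a path in $T$ between the root $x$ and the mixed vertex $z$: every internal vertex of this path is neither a leaf, nor a star vertex (a star vertex has at most one non-leaf neighbor, hence cannot be internal on a path connecting two non-leaves through it unless it is an endpoint), nor a root or mixed vertex (there are none others), giving a contradiction as soon as the path has length at least two. More carefully, any vertex strictly between $x$ and $z$ would have two non-leaf neighbors along the path, making it a root or a mixed vertex, contradicting uniqueness; so $x$ and $z$ are adjacent. The main obstacle, such as it is, is the bookkeeping in this last step: one has to be careful that ``no vertex of degree $2$'' (Lemma \ref{lem:2}) plus the type definitions genuinely leave no room for a non-star, non-root, non-mixed internal vertex, and that the exchange arguments used in Lemmas \ref{lem:4} and \ref{lem:5} have not implicitly been reused in a way that breaks similarity; but since we are only combining already-established lemmas, this is routine. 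Putting the four cases together yields exactly the trichotomy stated in the corollary.
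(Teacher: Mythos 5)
Your proposal is correct and follows essentially the same route as the paper, which derives the corollary by combining the exclusion of stars and double stars for $t\ge 36$ (from the cited works) with Lemmas \ref{lem:2}, \ref{lem:4} and \ref{lem:5}; your extra details (the exhaustiveness of the type classification given no degree-$2$ vertices, and the path argument forcing the root and mixed vertex to be adjacent) are exactly the routine verifications the paper leaves implicit.
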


\begin{figure}[htb]
    \centering
    \includegraphics[width=1.0\textwidth]{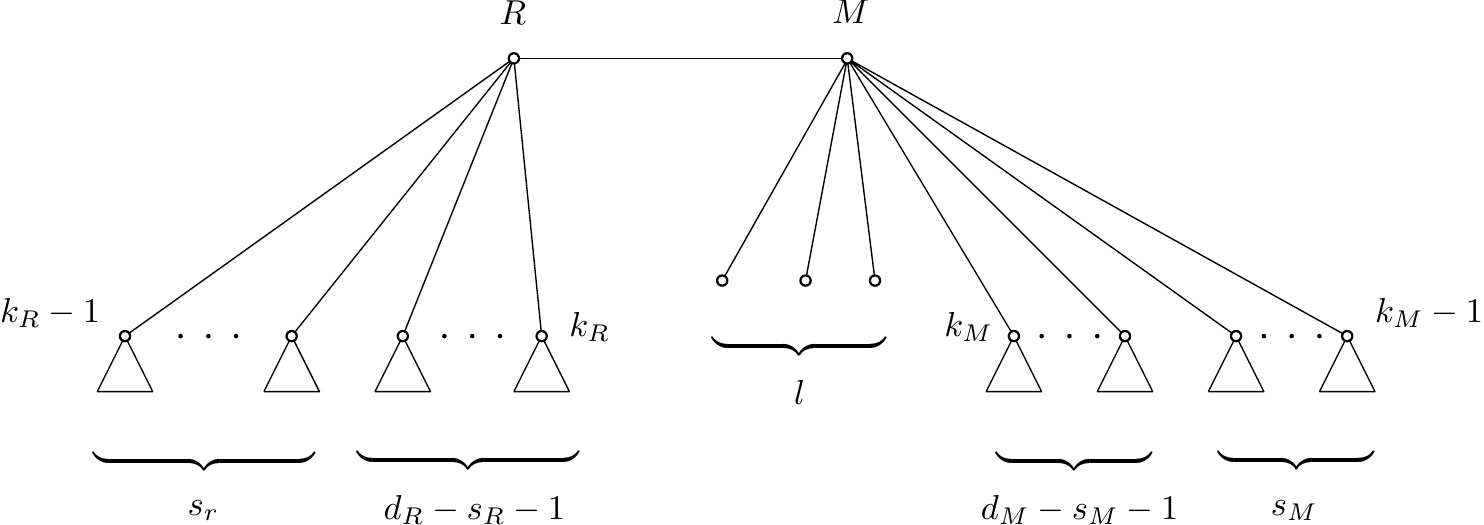}
    \caption{The $t$-minimal trees have at most one root vertex ($R$) and at most one mixed vertex ($M$).}
    \label{fig:RM}
\end{figure}

The results proved above give a restricted structure for $t$-minimal trees. The structure with both---a root and a mixed vertex---is shown in Figure \ref{fig:RM}.
There may be just one root vertex (which turns out to be the case when $t\ge 1195$) or just one mixed vertex (which happens when $t<1195$ with several exceptions). The $S$-branches adjacent to $R$ or to $M$ have the same order or two consecutive orders (see Lemma \ref{lem:7}). So we have up to 7 parameters: degrees $d_R$ and $d_M$ of $R$ and $M$, the number of leaves $l$ at the mixed vertex, the larger order $k_R$ and $k_M$ of $S$-branches at $R$ and at $M$, respectively, and the number $s_R$ and $s_M$ of $S$-branches of order $k_R-1$ and $k_M-1$. Clearly,
$$
   t = l + (d_R-1)k_R - s_R + (d_M-l-1)k_M - s_M.
$$
In addition to this, we may assume (after using similarity exchanges if needed) to have the following inequalities (assuming $R$ and $M$ both exist):
\begin{eqnarray*}
% \nonumber to remove numbering (before each equation)
  d_R &\ge& d_M \ge k_R+1\ge k_M+1 \\
  k_R &\ge& k_M \\
  l &\le& d_M-2 \\
  s_R &\le& d_R-2 \\
  s_M &\le& d_M-l-2.
\end{eqnarray*}
Moreover, as proved in \cite{Lin_etal_16}, $k_M\ge5$. This enables us to make a brute force search for optimal parameters for every fixed $t$.\footnote{When $t\le 2000$, this takes only a couple of seconds on a desktop PC.}  Additional restrictions provided below reduce the number of cases to be treated and also establish our main theorem for large enough $t$. We will use the above notation in the rest of the paper. We shall also assume that $t\ge36$.

Before continuing, we define the notion of the \emph{ABC-contribution} $c(v)$ of a leaf $v$ in a tree $T$ as follows.
If $v$ is contained in an $S_k$-branch and the star vertex is adjacent to a root or to a mixed vertex of degree $d$, then
$c(v) = f(k+1,1) + \frac{1}{k} f(k+1,d)$. If $v$ is a leaf adjacent to $M$ and $R$ exists, then $c(v) = f(d_M,1) + \frac{1}{l} f(d_M,d_R)$.  The remaining possibility is that $v$ is a leaf adjacent to $M$ and $R$ does not exists; in that case $c(v) = f(d_M,1)$. Clearly, the sum of all contributions of the leaves satisfies:
\begin{equation}
   \sum_{v; \deg(v)=1} c(v) = \abc(T).
\label{eq:leaf contributions 1}
\end{equation}

The leaf contributions of leaves adjacent to a star of order $k$ only depend on $k$ and on the degree $d$ of the root or mixed vertex adjacent to the star. In that case we also use the notation
$$c(k,d) = f(k+1,1) + \frac{1}{k} f(k+1,d).$$
We will need the following basic properties of leaf contributions.

\begin{lemma}
\label{lem:5a}
The leaf contributions function $c(k,d)$ has the following properties.

{\rm (a)} $c(k,d)$ is monotone decreasing in $d$ for every fixed $k\ge2$.

{\rm (b)} $c(k,d) - c(k_0,d)$ is monotone increasing in $d$ when $k>k_0$ and decreasing when $k<k_0$.

{\rm (c)} The function $\Delta_0(k,d) = k\cdot c(k,d) - (k+1) c(k+1,d)$ is increasing in $d$.
\end{lemma}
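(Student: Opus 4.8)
The plan is to treat all three parts by a single device: substitute $x=1/d$ and use the closed form
\[ c(k,d)=\sqrt{\tfrac{k}{k+1}}+\frac{\sqrt{1+(k-1)x}}{k\sqrt{k+1}}, \]
which follows from $f(k+1,1)=\sqrt{k/(k+1)}$ and $f(k+1,d)=f(d,k+1)=\tfrac{1}{\sqrt{k+1}}\sqrt{1+(k-1)x}$, the latter being Lemma~\ref{lem:1}(3) (valid for $k\ge2$). Since $d\mapsto 1/d$ is decreasing, ``increasing in $d$'' is the same as ``decreasing in $x$'', and conversely. Part~(a) is then immediate: the only $x$-dependent term of $c(k,d)$ is $\tfrac{1}{k\sqrt{k+1}}\sqrt{1+(k-1)x}$, which is increasing in $x$ since $k-1>0$; hence $c(k,d)$ is decreasing in $d$.

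For part~(b), put $q_k(x)=\tfrac{1}{k\sqrt{k+1}}\sqrt{1+(k-1)x}$, so that $c(k,d)-c(k_0,d)$ equals $q_k(x)-q_{k_0}(x)$ plus a term independent of $x$. I would show $q_k(x)-q_{k_0}(x)$ is decreasing in $x$ when $k>k_0$ by comparing derivatives. Both $q_k'(x)=\tfrac{k-1}{2k\sqrt{k+1}\sqrt{1+(k-1)x}}$ and $q_{k_0}'(x)$ are positive, so $q_k'(x)\le q_{k_0}'(x)$ is equivalent to its square; clearing denominators turns this into an inequality that is \emph{affine} in $x$, namely
\[ (k-1)^2k_0^2(k_0+1)\bigl(1+(k_0-1)x\bigr)\ \le\ (k_0-1)^2k^2(k+1)\bigl(1+(k-1)x\bigr). \]
Since $x\ge0$, it suffices to compare the constant terms and the coefficients of $x$ separately. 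The coefficient comparison reduces to monotonicity of $\tfrac{k-1}{k^2(k+1)}$, which decreases for all $k\ge2$; the constant-term comparison reduces to monotonicity of $\tfrac{(k-1)^2}{k^2(k+1)}$, which decreases exactly for $k\ge4$. In the $t$-minimal trees under consideration every $S$-branch has order at least $4$ (indeed $k_M\ge5$ whenever a mixed vertex is present), so this is precisely the range in which the lemma is applied, and the comparison goes through; the case $k<k_0$ follows by negating.

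Part~(c) uses the same reduction: $\Delta_0(k,d)$ equals $\tfrac{\sqrt{1+(k-1)x}}{\sqrt{k+1}}-\tfrac{\sqrt{1+kx}}{\sqrt{k+2}}$ plus a term independent of $x$. Differentiating in $x$, the monotonicity we want (decreasing in $x$) amounts, after squaring and clearing denominators, to the affine-in-$x$ inequality
\[ (k-1)^2(k+2)(1+kx)\ \le\ k^2(k+1)\bigl(1+(k-1)x\bigr). \]
Its constant-term difference is $k^2(k+1)-(k-1)^2(k+2)=k^2+3k-2>0$, and the coefficient of $x$ reduces, after dividing by $k(k-1)$, to $(k-1)(k+2)\le k(k+1)$, i.e.\ $-2\le0$; both hold for every $k\ge2$ and all $x\ge0$, so $\Delta_0(k,d)$ is decreasing in $x$, i.e.\ increasing in $d$. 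The only mildly delicate point in the whole argument is the constant-term comparison in part~(b), which can fail when $k_0\le3$ and $d$ is large; hence the statement genuinely requires the $S$-branch orders to be at least $4$, which is automatic here since $k_M\ge5$. Beyond that, everything is elementary single-variable calculus together with routine polynomial bookkeeping, so I do not anticipate a serious obstacle.
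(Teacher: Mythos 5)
Your argument is the same in substance as the paper's (compare the derivatives $\partial c/\partial d$ of the two terms), with the substitution $x=1/d$ serving only to make the affine structure of the squared inequalities visible; parts (a) and (c) are carried out correctly and in full generality ($k\ge 2$, all $d$). The genuinely valuable content of your write-up is the caveat you attach to part (b): you are right that the statement, read literally, is false for small orders, and this is worth making explicit rather than burying in a side remark. A concrete witness: for $(k,k_0)=(4,3)$ one has
\[
  \frac{\partial}{\partial d}\bigl(c(4,d)-c(3,d)\bigr)
  \;=\;\frac{1}{d^2}\Bigl(\frac{1}{12\,f(4,d)}-\frac{3}{40\,f(5,d)}\Bigr)
  \;\longrightarrow\;\frac{1}{d^2}\Bigl(\frac{1}{6}-\frac{3\sqrt{5}}{40}\Bigr)<0
  \quad(d\to\infty),
\]
so $c(4,d)-c(3,d)$ is eventually \emph{decreasing} in $d$, contrary to the claim for $k>k_0$. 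This also means the paper's own reduction of (b) to consecutive differences $c(i,d)-c(i-1,d)$, dismissed as ``a simple exercise,'' actually fails at $i=4$ (and several non-consecutive pairs with $k_0=2$, e.g.\ $(8,2)$, fail as well). Your repair --- requiring both orders to be at least $4$, under which $\tfrac{(k-1)^2}{k^2(k+1)}$ and $\tfrac{k-1}{k^2(k+1)}$ are both decreasing and the affine comparison closes --- is correct, and it covers every use of part (b) in the paper (Table~\ref{tab:1} and its surrounding discussion only invoke $c(k,d)-c(10,d)$ for $5\le k\le 16$), so nothing downstream is damaged; but the lemma's hypothesis should be amended accordingly. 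One small presentational point: your phrase ``fails when $k_0\le3$'' is slightly imprecise --- the failure occurs whenever $\min(k,k_0)\le 3$ and in a few pairs with $\min=2$ and $k$ up to $8$ --- so state the fixed hypothesis as ``$k,k_0\ge4$'' rather than characterizing the exceptional set.
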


\begin{proof}
(a) follows easily from Lemma \ref{lem:1}(3). To prove (b) and (c), one simply looks at the derivatives
$$
    \frac{\partial}{\partial d} c(k,d) = \frac{1}{k}\, \frac{\partial}{\partial d} f(k+1,d) = \frac{-(1-\tfrac{2}{k+1})}{2kd^2f(k+1,d)}
$$
and
$$
    \frac{\partial}{\partial d} (k\,c(k,d)) = \frac{-(1-\tfrac{2}{k+1})}{2d^2f(k+1,d)}.
$$
For (b) it suffices to consider the case when $k_0=k-1$ since $c(k,d)-c(k_0,d)$ can be written as the sum of consecutive differences of the form $c(i,d)-c(i-1,d)$ for $i=k_0+1,\dots , k$. As this is a simple exercise, we leave the details to the reader.
\end{proof}

Table \ref{tab:1} shows contributions $c(k,d)$ of leaves in $S$-branches of order $k$ for $k=5,\dots,16$ when the degree $d$ of the adjacent root or mixed vertex is 120 or very large (respectively). The differences between the values $c(k,d)-c(10,d)$ change when $d$ gets larger, but they stay between the two values in the table by Lemma \ref{lem:5a}(b). The minimum of $c(k,d)$ when $d\ge120$ is fixed is always attained at $k=10$.

\begin{table}[ht]
\centering % used for centering table
\begin{tabular}{|c||c|c||c|c|}
  \hline
  % after \\: \hline or \cline{col1-col2} \cline{col3-col4} ...
$k$ & $c(k,120)$ & $c(k,120)-c(10,120)$ & $c(k,\infty)$ & $c(k,\infty)-c(10,\infty)$ \\ \hline
5 & 0.99587026 & 0.011146309 & 0.99452072 & 0.010906887 \\
6 & 0.99011316 & 0.005389211 & 0.98881431 & 0.005200473 \\
7 & 0.98716926 & 0.002445313 & 0.98592210 & 0.002308263 \\
8 & 0.98567376 & 0.000949811 & 0.98447583 & 0.000861993 \\
9 & 0.98497203 & 0.000248084 & 0.98381983 & 0.000205997 \\
10 & 0.98472395 & 0 & 0.9836138 & 0 \\
11 & 0.98474189 & 0.000017939 & 0.9836704 & 0.000056574 \\
12 & 0.98491753 & 0.000193580 & 0.9838815 & 0.000267700 \\
13 & 0.98518611 & 0.000462157 & 0.9841828 & 0.000568935 \\
14 & 0.98550786 & 0.000783911 & 0.9845347 & 0.000920824 \\
15 & 0.98585791 & 0.001133961 & 0.9849126 & 0.001298763 \\
16 & 0.98622049 & 0.001496542 & 0.9853011 & 0.001687235 \\
  \hline
\end{tabular}
\caption{Leaf contributions at $S_k$-branches (with the last shown digit rounded).}
\label{tab:1}
\end{table}

\begin{lemma}
\label{lem:6}
Let $T$ be a $t$-minimal tree, where $t\ge 200$.

{\rm (a)} If $T$ contains a root vertex, then the root is adjacent with an $S_k$-branch, where $k\le10$. Consequently, any $S_q$-branch in $T$ has $q\le 11$.

{\rm (b)} If $T$ contains a mixed vertex, then it does not have a root vertex.
\end{lemma}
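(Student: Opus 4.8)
The plan is to prove each part by a local‑modification argument: assuming the forbidden configuration occurs in a $t$‑minimal tree $T$, I build a tree $T'$ with the same number of leaves and strictly smaller ABC‑index, contradicting minimality. Throughout I work with the leaf‑contribution decomposition \eqref{eq:leaf contributions 1}, the monotonicity facts in Lemma~\ref{lem:5a}, and the two‑sided bounds in Lemma~\ref{lem:1}.

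\emph{Part (a).} If some $S$‑branch at the root $R$ already has order $\le 10$ there is nothing to prove, so suppose every $S$‑branch at $R$ has order $\ge 11$; in particular $k_R\ge 11$. As in the proof of Lemma~\ref{lem:5} (apply Lemma~\ref{lem:similar}(a) to a leaf of an $S_{k_R}$‑branch and another neighbour of $R$) one gets $d_R\ge k_R+1\ge 12$, so $R$ has at least $d_R-1\ge 11$ neighbouring $S$‑branches. I pick any ten of them, pool their $L\ge 110$ leaves, and redistribute those leaves into \emph{eleven} $S$‑branches of as equal as possible orders; since $L\le 10k_R$, each new order lies in $\{\lfloor L/11\rfloor,\lceil L/11\rceil\}\subseteq\{10,\dots,k_R-1\}$, and $d_R$ increases by one. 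Reading $\abc(T)-\abc(T')$ off leaf contributions, it is a sum of three pieces: (i) the changes for all untouched $S$‑branches at $R$ and for the leaves at a mixed vertex, if any, each strictly positive because $d_R$ grew (Lemma~\ref{lem:5a}(a), Lemma~\ref{lem:1}(3)); (ii) the change in the leaf‑edge part of the rebalanced leaves, which is $\ge 0$ since $f(k+1,1)$ is increasing in $k$ and each rebalanced leaf now lies in a branch of order $\le k_R-1$; and (iii) the change in the branch‑edge part of the rebalanced branches. In the tightest case — all ten chosen branches of order $11$ — pieces (ii) and (iii) together equal
\[
  \Delta_1(d_R)=110\bigl(f(12,1)-f(11,1)\bigr)+10\,f(12,d_R)-11\,f(11,d_R+1),
\]
and the crux is the inequality $\Delta_1(d_R)>0$ for every $d_R\ge 12$; the case $k_R\ge 12$, and the case of branches of two distinct orders, reduce to entirely analogous (and slightly slacker) inequalities. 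Combined with piece (i) this gives $\abc(T')<\abc(T)$, a contradiction.

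\emph{Part (b).} Suppose $T$ ($t\ge 200$) has both a mixed vertex $M$ and a root $R$. By Corollary~\ref{cor:structure intermediate} they are adjacent, and the normalisation of the parameters gives $d_R\ge d_M\ge k_R+1\ge k_M+1$ with $k_M\ge 5$; by part (a), $k_R\le 11$. Let $m=d_M-l-1\ge 1$ be the number of $S$‑branches at $M$, and form $T'$ by reattaching all $m$ of them to $R$ instead of to $M$; then $M$ keeps only its $l$ leaves and the edge $MR$, so it becomes the star vertex of an $S_l$‑branch hanging at $R$, while $\deg R$ increases to $d_R+m$ and $\deg M$ decreases to $l+1$. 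Reading $\abc(T)-\abc(T')$ off leaf contributions, every term is a gain — the $l$ leaves at $M$ have their leaf‑edges drop from $f(d_M,1)$ to $f(l+1,1)$, and the $m$ relocated branches together with the $d_R-1$ original branches of $R$ have their branch‑edges drop because they now meet a vertex of larger degree (Lemma~\ref{lem:5a}(a)) — except for the single term from the edge $MR$, which changes from $f(d_M,d_R)$ to $f(l+1,d_R+m)$; since $(l+1)+(d_R+m)=d_M+d_R$ while the pair $\{l+1,\,d_R+m\}$ is more spread out than $\{d_M,d_R\}$, this one term is a loss. One bounds that loss with Lemma~\ref{lem:1} and checks it is outweighed by the gain coming from the $m$ relocated branches and $M$'s leaves; the only configurations in which the margin is tight have $l$ close to $d_M-2$ and small branch orders, and there the leaf count $t$ is forced well below $200$, so they do not occur (a more careful choice of branches to move, or a contraction of $MR$, disposes of any remaining borderline case). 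Hence $\abc(T')<\abc(T)$, the desired contradiction, which proves that a $t$‑minimal tree with a mixed vertex has no root.

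The step I expect to be genuinely delicate is piece (iii) of part (a): the inequality $\Delta_1(d_R)>0$ holds with a margin of only about $10^{-3}$ near its worst value (around $d_R\approx 50$), so it cannot be handled by crude estimates and requires the sharp bounds of Lemma~\ref{lem:1}(1) and (3) (indeed the limiting value $\Delta_1(\infty)$ is already barely positive). By contrast, the analogous inequalities for $k_R\ge 12$, and the bookkeeping of the small‑$t$ configurations appearing in part (b), are routine.
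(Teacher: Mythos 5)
Your strategy is sound and genuinely different from the paper's in both parts: the paper proves (a) by replacing $j$ branches at $R$ with $j+1$ branches, one of order exactly $10$, and argues entirely through the leaf-contribution function $c(k,d)$; it proves (b) in two stages (first forcing $l\le 36$, then dissolving $M$ by adding one leaf to each of $l$ stars at $R$), with computer verification over a range of degrees and of $t\le 1500$. Your single-move version of (b) --- reattach all of $M$'s branches to $R$ so that $M$ itself becomes an $S_l$-branch --- is arguably cleaner than the paper's. But in both parts you reduce correctly to explicit inequalities and then stop short of establishing them, and in part (a) the route you propose for closing the gap does not work. Your crux inequality $\Delta_1(d_R)>0$ is true (numerically its minimum is about $9\times 10^{-4}$, attained near $d_R\approx 44$), but the two-sided bounds of Lemma~\ref{lem:1} cannot detect it: Lemma~\ref{lem:1}(1) only gives $f(12,1)-f(11,1)>\tfrac{11}{12}-\tfrac{21}{22}=-\tfrac{5}{132}$, a negative bound for a quantity whose actual value is $\approx 0.00396$, and applying Lemma~\ref{lem:1}(3) to $10f(12,d_R)-11f(11,d_R+1)$ undershoots the truth by roughly $0.3$ in the critical range. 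Since the whole inequality lives on a margin of order $10^{-3}$, you need exact evaluation of the square roots together with a monotonicity/limit argument in $d_R$ --- essentially a computation, not an application of Lemma~\ref{lem:1}. The assertion that the cases $k_R\ge 12$ and mixed orders are ``slightly slacker'' is likewise unproved (it appears true; e.g.\ for ten $S_{12}$'s the limiting margin is about $0.026$, but this has to be checked).

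In part (b) the quantitative verification is entirely absent, and your diagnosis of where it is tight is backwards. When $l$ is close to $d_M-2$ (so $m$ is small), the single lost term $f(l+1,d_R+m)-f(d_M,d_R)$ is tiny --- of order $d_M^{-3/2}$ --- and the move wins easily; the genuinely worst configurations are small $l$ (the loss then approaches $\sqrt{1/(l+1)}$, e.g.\ about $0.7$ for $l=1$) combined with $k_M=5$, where the per-branch gain of the relocated branches is smallest. There the margin is of order $0.1$, so your move does appear to succeed, but one must actually write down and verify the inequality over the admissible ranges of $l,m,k_M,k_R,d_M,d_R$ (using $d_R\ge d_M\ge k_R+1$, $k_M\ge 5$, $k_R\le 11$ from part (a), and the fact that $t\ge 200$ forces $d_R$ to be large); that verification is the entire content of the lemma and cannot be waved away with ``one checks''. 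Finally, you never address the ``consequently, any $S_q$-branch has $q\le 11$'' clause, which needs Lemma~\ref{lem:7} at $R$ plus the normalisation $k_M\le k_R$ to cover branches at $M$.
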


\begin{proof}
(a) If $T$ contains a root and a mixed vertex, then by using Lemma \ref{lem:similar}(a) we see that $T$ is similar to a minimal tree with $d_R \ge d_M \ge k_R+1\ge k_M+1$, which we assume henceforth. This implies that more than $t/2$ leaves are within $S$-branches that are adjacent to the root $R$. To verify the claim, it suffices to see that there exists an $S$-branch of order $10$ or less. Suppose, for a contradiction, that this is not the case. Then the $S$-branches adjacent to $R$ have orders $k_1\ge k_2\ge \cdots \ge k_{d_R-1}\ge11$. Since $\sum_i k_i > \frac{t}{2} \ge 100$, it is easy to see that there is an index $j$ such that $\sum_{i=1}^j k_i \ge 10(j+1)$. Then we may change $T$ to a tree with the same number of leaves by replacing the $S_{k_i}$-branches ($i=1,\dots,j$) with $j+1$ branches, one of order 10 and the others of orders $k_i'$, where $10\le k_i'\le k_i$ for each $i=1,\dots,j$. It is an easy calculation to show that the leaf contributions of all the leaves in $S$-branches adjacent to $R$ decrease. The degree of $R$ also increases, so the contribution of the possible edge $RM$ also drops (by Lemma \ref{lem:1}(3)), and all other contributions remain unchanged. Therefore, the ABC-index drops, which is a contradiction.

(b) Suppose that $T$ has a root $R$ and a mixed vertex $M$. We will reach a contradiction in two steps. In the first step we show that $l < 37$.

Suppose that $l\ge37$. We change the tree $T$ into a tree $T'$ which has one root vertex by moving all stars adjacent to $M$ to be adjacent to the root vertex and replacing $l$ pendant edges at $M$ with stars of orders 9, 10, 11 and make them adjacent to $R$ as well. By applying formula (\ref{eq:leaf contributions 1}) and Lemma \ref{lem:1}, we see that
$$
    \Delta(T,T') > l f(d_M,1) - l \max_{k\in\{9,10,11\}} c(k,d_R+4) \ge l(f(l+2,1) - c(9,l+4)).
$$
It is easy to see that the factor $f(l+2) - c(9,l+4)$ is positive for every $l\ge 37$. This contradicts $t$-minimality of $T$ and proves that $l\le36$.

Knowing that $l\le36$, we continue as follows. First, recall that $d_R \ge d_M \ge l+2$. Thus, there are $l$ stars $S_1,\dots, S_l$ adjacent to the root vertex. Each of them is of order $\le11$ by part (a). Now we change $T$ into a tree $T'$ as follows.
We first move all stars adjacent to $M$ to the root vertex (which will be denoted by $R'$) and then, for $i=1,\dots,l$, replace each star $S_i$ of order $k_i$ with a star $S_i'$ of order $k_i+1$. Then we remove the vertex $M$ and adjacent leaves. The resulting tree $T'$ has the same number of leaves and we will show that its ABC-index is smaller (thus giving a contradiction). The edge-contributions of leaves in all stars different from $S_1,\dots, S_l$ have gone down (or stayed the same) because $d_{R'} \ge d_R$ (see Lemma \ref{lem:5a}(a)). Thus
$$
   \Delta(T,T') \ge \sum_{i=1}^l \Bigl( c(w_i) + k_i c(k_i,d_R) - (k_i+1) c(k_i+1, d_{R'}) \Bigr)
$$
where $w_1,\dots,w_l$ are the leaves adjacent to $M$ in $T$. We claim that each term in the above sum is positive.

First of all, we have:
\begin{eqnarray*}
  c(w_i) &=& f(d_M,1)+ \tfrac{1}{l} f(d_M,d_R) \\
         &\ge& f(d_M,1) + \tfrac{1}{d_M-2} f(d_M,\infty) \\
         &=& f(d_M,1) + \tfrac{1}{d_M-2} \sqrt{1/d_M}.
\end{eqnarray*}
By using Lemma \ref{lem:5a}(a) and (c) we conclude the following:
\begin{eqnarray*}
   k_i c(k_i,d_R) - (k_i+1) c(k_i+1, d_{R'}) &\ge& k_i c(k_i,d_{R'}) - (k_i+1) c(k_i+1, d_{R'}) \\
   &=& \Delta_0(k_i,d_{R'})\ge \Delta_0(k_i,d_R).
\end{eqnarray*}
By combining the above three inequalities, we see that it suffices to prove the following inequality for $d=d_M$ and $k=k_i\le11$:
\begin{equation}\label{eq:noRandM}
  f(d,1) + \sqrt{\tfrac{1}{d(d-2)^2}} + \Delta_0(k,d) > 0.
\end{equation}

Inequality (\ref{eq:noRandM}) has been verified for intermediate values of $d$, $26\le d\le 100$, and for all $k$ from 5 to 11 by a computer calculation. For $d\ge100$ it can be proved analytically (the value is increasing in $d$ for $d\ge 100$ for each $k$). The details are left to the reader.

Unfortunately, (\ref{eq:noRandM}) fails for some values of $k\le11$ when $d\le 25$.
This range can be considered by using different approaches. We found it easiest to use our afore-mentioned algorithm to verify the claim by computer for all $t\le 1500$. We may then assume that $t\ge 1500$. It is easy to see that in this case we have $d_{R'}\ge 120$. Moreover, by the same proof as used in part (a) of the proof, we see that there are more than $l$ $S_{10}$-stars. Therefore, we only need to treat the case where $k=10$. The inequality (\ref{eq:noRandM}) can be replaced by:
\begin{equation}\label{eq:noRandM2}
  f(d,1) + \sqrt{\tfrac{1}{d(d-2)^2}} + \Delta_0(10,120) > 0.
\end{equation}
It is easy to see that (\ref{eq:noRandM2}) holds for every $d$, $3\le d\le 25$. This completes the proof.
\end{proof}

\begin{lemma}
\label{lem:6a}
If $T$ is a $t$-minimal tree, where $t\ge1195$, then $T$ has a root vertex and does not have a mixed vertex, and $T$ is isomorphic to the tree $T_t$ as defined in Figures \ref{fig:T_t} and~\ref{fig:T_t89}.
\end{lemma}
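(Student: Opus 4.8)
The plan is to combine the structural restrictions already proved with a two–parameter optimization over the root degree and the branch orders, and then to isolate the two genuine candidates (structures $(A)$ and $(B)$ of Figures~\ref{fig:T_t} and~\ref{fig:T_t89}) by an asymptotic evaluation of their ABC-indices.

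First I would reduce $T$ to a ``root with $S$-branches'' tree. By Corollary~\ref{cor:structure intermediate} and Lemma~\ref{lem:6}(b), for $t\ge200$ a $t$-minimal tree has either a single root and no mixed vertex, or a single mixed vertex $M$ and no root. In the second case I would eliminate $M$. After similarity exchanges and Lemma~\ref{lem:7} we may assume all $S$-branches at $M$ have two consecutive orders, at most $11$ by Lemma~\ref{lem:6}(a) and at least $5$ by~\cite{Lin_etal_16}. Next I would drive the number $l$ of pendant edges at $M$ down: detaching a block of ten leaves from $M$ and reattaching it as a new $S_{10}$-branch at $M$ strictly decreases the ABC-index once $l$ (hence $d_M$) is large, since each of those leaves passes from contribution $f(d_M,1)$, which is close to $1$, down to $c(10,d_M)\approx 0.984$, while the drop of $d_M$ only helps every other contribution by Lemma~\ref{lem:5a}(a); repeating this makes $l$ bounded. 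With $l$ bounded and $d_M$ large, after a repacking as in Lemma~\ref{lem:6}(a) there are at least $l$ $S$-branches at $M$ of order $\le 10$, and moving each of the $l$ pendant leaves into a distinct such branch makes $M$ a root; the resulting $\Delta(T,T')>0$ follows from an estimate exactly of the type used in the proof of Lemma~\ref{lem:6}(b). Values $1195\le t\le t_0$ below whatever explicit bound $t_0$ the asymptotics require are dispatched by the brute-force search of Section~\ref{sect:2}.

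So from now on $T$ has a single root $R$ of degree $d_R\ge t/11>108$, every non-root vertex is a leaf or a star vertex adjacent to $R$ (a star vertex cannot have another star vertex as its unique non-leaf neighbour, as that would disconnect $T$ from $R$), and by Lemma~\ref{lem:7} the branch orders form a set of at most two consecutive integers, each at most $11$ by Lemma~\ref{lem:6}(a). By \eqref{eq:leaf contributions 1}, $\abc(T)=\sum_i k_i\,c(k_i,d_R)$, the sum over branches. I would first discard every branch of order $\le 8$: since $c(k,\infty)\ge c(8,\infty)>c(10,\infty)$ for $k\le8$ by Table~\ref{tab:1} and Lemma~\ref{lem:5a}(b), such a tree satisfies $\abc(T)\ge\bigl(c(9,\infty)+\varepsilon\bigr)t$ for a fixed $\varepsilon>0$, exceeding the indices computed below, a contradiction for $t$ large. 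Hence all branch orders lie in $\{9,10\}$ or in $\{10,11\}$ (or are all equal). For a fixed pair $\{m,m+1\}$ of orders and fixed $t$ the branch multiset is determined by $d_R=d$ via $a=(m+1)d-t$ branches of order $m$ and $b=t-md$ of order $m+1$, admissible when $\lceil t/(m+1)\rceil\le d\le\lfloor t/m\rfloor$; I would show $\abc(T)$ is monotone in $d$ over this range, because $d\mapsto d+1$ trades $(m+1)m$ leaves from branches of order $m+1$ to branches of order $m$ and lowers all remaining contributions, so the sign of the change equals that of $c(m+1,\infty)-c(m,\infty)$ up to $O(t^{-1})$. Thus in the $\{10,11\}$-range the optimum is attained at $d=\lfloor t/10\rfloor=p$ (structure $(A)$: $p-r$ branches $S_{10}$, $r$ branches $S_{11}$), and in the $\{9,10\}$-range at $d=\lceil t/10\rceil=p+1$ (structure $(B)$: $p+r-9$ branches $S_{10}$, $10-r$ branches $S_9$); all-equal multisets are the worst in their ranges and are excluded as well.

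It then remains to compare $(A)$ with $(B)$ and to pin down the thresholds. Expanding $f(k+1,d)=\tfrac{1}{\sqrt{k+1}}\bigl(1+\tfrac{k-1}{2d}+O(d^{-2})\bigr)$ and substituting $d_R=p$, resp.\ $d_R=p+1$, with $t=10p+r$, one obtains
\begin{align*}
\abc(T_A) &= \Bigl(\sqrt{\tfrac{10}{11}}+\tfrac{1}{10}\sqrt{\tfrac{1}{11}}\Bigr)t+\tfrac{9}{2}\sqrt{\tfrac{1}{11}}+11\bigl(c(11,\infty)-c(10,\infty)\bigr)r+O(t^{-1}),\\
\abc(T_B) &= \Bigl(\sqrt{\tfrac{10}{11}}+\tfrac{1}{10}\sqrt{\tfrac{1}{11}}\Bigr)t+\tfrac{9}{2}\sqrt{\tfrac{1}{11}}+9\bigl(c(9,\infty)-c(10,\infty)\bigr)(10-r)+O(t^{-1}),
\end{align*}
which already gives $\abc(T_A)<\abc(T_B)$ for $r\le 7$ and the reverse for $r\ge 8$ once $t$ is large, since $11\bigl(c(11,\infty)-c(10,\infty)\bigr)r<9\bigl(c(9,\infty)-c(10,\infty)\bigr)(10-r)$ reduces to $r<7.49\dots$. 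For $r\in\{5,6,7\}$ the constant parts are too close and the sign of $\abc(T_A)-\abc(T_B)$ flips at a finite $t$, so I would carry the expansion one term further, compute the coefficients of $t^{-1}$ in both indices exactly, and solve the resulting linear inequality in $t$; this is where the thresholds $t\ge1030,1201,1355,2316,7227$ of the captions (and their complements) come from, together with the brute-force check on the moderate range. The ABC-index formulas of Theorem~\ref{thm:main} are read off the same expansions, and strictness everywhere, together with the fact that $T_A$ and $T_B$ have different orders $t+p+1$ and $t+p+2$, yields uniqueness.

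\textbf{Main obstacle.} The delicate step is the last one: because $T_A$ and $T_B$ differ only by $O(1)$ in ABC-index, the crossover for $r\in\{5,6,7\}$ is governed by the $O(t^{-1})$ terms, so one must expand $\abc(T_A)$ and $\abc(T_B)$ to order $t^{-2}$ — carefully tracking the $d_R=p$ versus $d_R=p+1$ discrepancy, which contributes at exactly this order — and verify that the inequality solves to the stated thresholds (and that these agree with the computer data, making the overall bound $t\ge1195$ best possible). Everything preceding it is a chain of monotonicity arguments from Lemmas~\ref{lem:1}, \ref{lem:similar}, \ref{lem:5a}, \ref{lem:7}, \ref{lem:6} plus a bounded computer verification.
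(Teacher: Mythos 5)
Your proposal follows essentially the same route as the paper's proof: eliminate the mixed vertex by absorbing its pendant leaves into adjacent $S_{10}$-branches (the paper does this one leaf at a time, converting an $S_{10}$ into an $S_{11}$ and using $d_M>120$ and $r\le 9$), reduce via leaf contributions and Lemma~\ref{lem:7} to branch orders in $\{10,11\}$ or $\{9,10\}$, optimize over $d_R$, and settle the comparison of the two candidate structures by an asymptotic expansion combined with a computer check on the moderate range $1195\le t\le 10000$. The only quibble is a sign slip in your monotonicity justification --- the change of the index under $d\mapsto d+1$ has the sign of $c(m,\infty)-c(m+1,\infty)$, not its negative --- but the optima you then identify ($d_R=\lfloor t/10\rfloor$ for orders $\{10,11\}$ and $d_R=\lceil t/10\rceil$ for orders $\{9,10\}$) are the correct ones.
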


\begin{proof}
First, we claim that $T$ does not have a mixed vertex. If it does, there is no root vertex by the previous lemma.
Let $M$ be the mixed vertex. Previous results imply that $d_M > 120$.\footnote{This conclusion needs some case analysis when $t$ is very close to 1195. Independently, we have checked our claims for small values of $t$ by computer and thus this is not really an issue to be worried about.}
Let $d'$ be the number of $S_{10}$-stars adjacent to $M$ and let $r$ be the number of $S_9$-stars or $S_{11}$-stars. Let $d=d_M-1=d'+r+l-1\ge d'+r$. The case when there are stars of order 9 is much easier to argue (by the same proof method as used below), thus we shall assume for brevity that we have no stars of order 9. It is easy to see that we cannot have ten or more $S_{11}$-stars (replacing 10 of them with 11 stars of order 10 decreases the ABC-index when $d_M>120$). Thus $0\le r\le 9$.

The following inequality which holds for every $d\ge120$ is easy to verify:
\begin{equation}\label{eq:10vs11}
  f(11,d+1)-f(11,d) + f(12,d+1)-f(12,d) \le 6\cdot 10^{-4}.
\end{equation}

Now we consider the following tree $T'$ with $t$ leaves. We remove one of the leaves adjacent to $M$ and change one $S_{10}$ into $S_{11}$. The following chain of inequalities use the following: Lemma \ref{lem:1}(a) for the first inequality, (\ref{eq:10vs11}) and $d'+r \le d$ and $f(11,d+1)-f(11,d) < 0$ and $r\le 9$ for the second inequality:
\begin{eqnarray*}
  \Delta(T,T') &=& 10f(1,11)+f(1,d+1)-11f(1,12) + \\
   &&  (l-1)(f(1,d+1)-f(1,d)) + \\
   &&  d'f(11,d+1)-(d'-1)f(11,d) + \\
   &&  r f(12,d+1) - (r+1)f(12,d) \\
   &\ge& 10f(1,11)+f(1,d+1)-11f(1,12) + f(11,d+1) + \\
   &&  (d'-1)(f(11,d+1)-f(11,d)) + \\
   &&  r (f(12,d+1) - f(12,d)) - f(12,d) \\
   &\ge& 10f(1,11)+f(1,d+1)-11f(1,12) + f(11,d+1) + \\
   &&  (d-1)(f(11,d+1)-f(11,d)) - f(12,d) + 0.0054.
\end{eqnarray*}
The last quantity above is positive for $d=120$ and only increases\footnote{This is easily tabulated for small values of $d\ge120$. For the general case, basic calculus can be used to prove that the function is increasing.} when $d$ grows.
This implies that $\Delta(T,T')>0$. This contradiction shows that $T$ must have a root vertex and not a mixed vertex.

To deal with the case when there is a root vertex, we look at the contributions of the leaves. The minimum contribution is achieved with $S$-branches of order 10, the next smallest values are 11 and 9 (see Table \ref{tab:1}), and by Lemma \ref{lem:7} we have only one of these. This means that the tree is one of those described in captions of Figures \ref{fig:T_t} and \ref{fig:T_t89}. This has been done by a computation for all values of $t\le 10000$; the threshold values stated in Figures \ref{fig:T_t} and \ref{fig:T_t89} have been obtained from these computations. For $t\ge 10000$ it suffices to prove that the following inequality (which compares the ABC-indices of trees from both figures) is satisfied for $0\le r\le 7$ and for $p=\lfloor t/10\rfloor \ge 1000$:
$$
   10(p-r)c(10,p) + 11r\, c(11,p) < 10(p+r-9)c(10,p+1) + 9(10-r) c(9,p+1)
$$
and that the reverse inequality holds when $r=8,9$. Again, this task reduces to verify the inequality for $p=1000$ and then show that the difference is decreasing and that for $r=8,9$ the difference stays positive (by considering the limit when $p\to\infty$). It is easy to see that the worst cases are when $r=7$ and $r=8$ and that the proof for these two values implies the proof for all other values of $r$.
\end{proof}

\begin{lemma}
\label{lem:10}
Suppose that $t=10p+r$, where $0\le r\le 9$.

{\rm (a)} Let $T_t$ be the tree as defined by Figure \ref{fig:T_t}. Then
\begin{eqnarray*}
   \abc(T_t) &=&
  \left( \sqrt{\tfrac{10}{11}} + \tfrac{1}{10}\sqrt{\tfrac{1}{11}} \right) t +
  \tfrac{9}{2}\sqrt{\tfrac{1}{11}} + \\
    && \left(11\sqrt{\tfrac{11}{12}} + \sqrt{\tfrac{1}{12}} - \sqrt{110} + \tfrac{1}{10}\sqrt{11}\,\right) r + O(t^{-1}).
\end{eqnarray*}

{\rm (b)} Let $T_t$ be the tree as defined by Figure \ref{fig:T_t89}. Then
\begin{eqnarray*}
   \abc(T_t) &=&
      \Big( \sqrt{\tfrac{10}{11}} + \tfrac{1}{10}\sqrt{\tfrac{1}{11}}\, \Big) t + \tfrac{9}{2}\sqrt{\tfrac{1}{11}} + \\
   && 9(10-r)\Big( \tfrac{14}{45}\sqrt{10} - \sqrt{\tfrac{10}{11}} - \tfrac{1}{10}\sqrt{\tfrac{1}{11}}\,\Big) + O(t^{-1}).
\end{eqnarray*}
\end{lemma}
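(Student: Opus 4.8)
The plan is to compute $\abc(T_t)$ directly from the leaf-contribution identity~(\ref{eq:leaf contributions 1}). By Lemma~\ref{lem:6a} the tree $T_t$ has a root vertex $R$ and no mixed vertex, so every leaf of $T_t$ lies in an $S_k$-branch with $k\in\{9,10,11\}$, and by the definition of leaf contributions preceding~(\ref{eq:leaf contributions 1}) its contribution is exactly $c(k,d_R)$. Hence $\abc(T_t)$ is a finite sum of terms $c(k,d_R)$ whose multiplicities, together with $d_R$ (the number of $S$-branches), are read off from the recipes in Figures~\ref{fig:T_t} and~\ref{fig:T_t89}; those recipes still make sense for every $0\le r\le 9$ once $t$ is large. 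After this the only tools needed are Lemma~\ref{lem:1} and a Taylor expansion in $1/d_R$.

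For part~(a) the tree of Figure~\ref{fig:T_t} has $p-r$ branches $S_{10}$ and $r$ branches $S_{11}$, so $d_R=(p-r)+r=p$ and
\[
  \abc(T_t)=10(p-r)\,c(10,p)+11r\,c(11,p).
\]
I would write $c(10,p)=f(11,1)+\tfrac1{10}f(11,p)$ and $c(11,p)=f(12,1)+\tfrac1{11}f(12,p)$. By Lemma~\ref{lem:1}, $f(11,1)=\sqrt{\tfrac{10}{11}}$ and $f(12,1)=\sqrt{\tfrac{11}{12}}$ exactly, while part~(3) gives $f(11,p)=\sqrt{\tfrac1{11}}\sqrt{1+\tfrac9p}=\sqrt{\tfrac1{11}}\bigl(1+\tfrac9{2p}+O(p^{-2})\bigr)$ and $f(12,p)=\sqrt{\tfrac1{12}}\bigl(1+O(p^{-1})\bigr)$. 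Since $r$ is bounded, the $S_{11}$-part equals $11r\sqrt{\tfrac{11}{12}}+r\sqrt{\tfrac1{12}}+O(p^{-1})$, and the $S_{10}$-part equals $10(p-r)\sqrt{\tfrac{10}{11}}+(p-r)\sqrt{\tfrac1{11}}+\tfrac{9(p-r)}{2p}\sqrt{\tfrac1{11}}+O(p^{-1})$, whose third summand is $\tfrac92\sqrt{\tfrac1{11}}+O(p^{-1})$. Substituting $10p=t-r$ (so $p^{-1}=O(t^{-1})$) and collecting the coefficient of $t$, the absolute constant, and the part linear in $r$ yields the formula of part~(a).

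Part~(b) is entirely analogous. The tree of Figure~\ref{fig:T_t89} has $p+r-9$ branches $S_{10}$ and $10-r$ branches $S_9$, so $d_R=(p+r-9)+(10-r)=p+1$ and $\abc(T_t)=10(p+r-9)\,c(10,p+1)+9(10-r)\,c(9,p+1)$. Here $f(10,1)=\sqrt{\tfrac9{10}}=\tfrac3{\sqrt{10}}$, and since $\tfrac1{p+1}=\tfrac1p+O(p^{-2})$ the same expansion gives $c(10,p+1)=\sqrt{\tfrac{10}{11}}+\tfrac1{10}\sqrt{\tfrac1{11}}+\tfrac9{20p}\sqrt{\tfrac1{11}}+O(p^{-2})$ and $c(9,p+1)=\tfrac3{\sqrt{10}}+\tfrac1{9\sqrt{10}}+O(p^{-1})=\tfrac{14}{45}\sqrt{10}+O(p^{-1})$. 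Because only $10-r=O(1)$ branches have order $9$, their first-order corrections are absorbed into $O(t^{-1})$, and the $S_{10}$-part again produces the constant $\tfrac92\sqrt{\tfrac1{11}}$. Using $10p+10r-90=t-9(10-r)$ to pass back to $t$ and collecting terms yields the formula of part~(b).

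The one point that needs care --- and the place I would expect the ``main obstacle'' to hide --- is that the absolute constant $\tfrac92\sqrt{\tfrac1{11}}$ is \emph{not} a leading-order quantity: it comes from the first-order correction $\tfrac9{2p}$ in the expansion of $f(11,d_R)$ amplified by the $\Theta(t)$ many leaves that sit in $S_{10}$-branches, whereas the analogous first-order corrections for the $O(1)$ many $S_{11}$- and $S_9$-branches are harmless. One must therefore keep the expansion of $f(11,d_R)$ to order $p^{-1}$ but may truncate everything else at the leading term; the remaining work is routine manipulation of surds (using, e.g., $11\sqrt{\tfrac{10}{11}}=\sqrt{110}$ and $\tfrac3{\sqrt{10}}+\tfrac1{9\sqrt{10}}=\tfrac{14}{45}\sqrt{10}$) to bring both expressions into the stated closed form.
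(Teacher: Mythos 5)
Your proposal is correct and follows essentially the same route as the paper: apply the leaf-contribution identity (\ref{eq:leaf contributions 1}) to obtain $\abc(T_t)=10(p-r)\,c(10,p)+11r\,c(11,p)$ (resp.\ $10(p+r-9)\,c(10,p+1)+9(10-r)\,c(9,p+1)$) and then expand via Lemma \ref{lem:1}(3), and you correctly isolate the one delicate point, namely that the $\tfrac{9}{2d}$ correction in $f(11,d)$ must be retained because it is multiplied by the $\Theta(t)$ leaves in $S_{10}$-branches, producing the constant $\tfrac{9}{2}\sqrt{\tfrac{1}{11}}$. One caveat: carefully collecting the $r$-terms in part (a) as you describe gives the coefficient $11\sqrt{\tfrac{11}{12}}+\sqrt{\tfrac{1}{12}}-\sqrt{110}-\tfrac{1}{10}\sqrt{11}$ (numerically $\approx 0.00062=11\bigl(c(11,\infty)-c(10,\infty)\bigr)$, consistent with Table \ref{tab:1}), so the sign of the final term $\tfrac{1}{10}\sqrt{11}$ in the stated formula appears to be a typo in the lemma rather than something your computation actually ``yields''.
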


\begin{proof}
The formula (\ref{eq:leaf contributions 1}) implies, for the first case, that
$$
   \abc(T_t) = 10(p-r)c(10,p) + 11r\, c(11,p)
$$
and for the second one:
$$
   \abc(T_t) = 10(p+r-9)c(10,p+1) + 9(10-r) c(9,p+1).
$$
A routine calculation using approximations
$$
    f(k,d) = \sqrt{\tfrac{1}{k}}\Bigl(1 + \tfrac{k-2}{2d} - O(d^{-2})\Bigr)\quad \hbox{and} \quad d=\tfrac{t}{10} + O(1)
$$
(see Lemma \ref{lem:1}(3)) gives the claimed expressions.
\end{proof}

\begin{proof}[Proof of Theorem \ref{thm:main}]
The structure of $t$-minimal trees described after Corollary \ref{cor:structure intermediate} enabled us to search for minimal trees for all values of $t\le 2000$. The calculations verify the claim of the theorem. On the other hand, for $t\ge 2000$, the results given above show the same: we have one root vertex and any $t$-minimal tree is isomorphic to $T_t$.
Finally, Lemma \ref{lem:10} gives the asymptotic value of $\abc(T_t)$.
\end{proof}

\subsection*{Acknowledgement}
The author is indebted to Seyyed Aliasghar Hosseini for several helpful remarks.

\end{document}